\newtheorem{thm}{Theorem}
\newtheorem{lem}[thm]{Lemma}
\newtheorem{prop}[thm]{Proposition}
\newtheorem{cor}[thm]{Corollary}
\newtheorem{defin}[thm]{Definition}
\newtheorem{rem}[thm]{Remark}
\newtheorem{ex}[thm]{Example}
\def\nil{{\rm{nil}\hskip1pt}}
\def\cat{{\rm{cat}\hskip1pt}}
\def\wcat{{\rm{wcat}\hskip1pt}}
\def\secat{{\rm{secat}\hskip1pt}}
\def\wsecat{{\rm{wsecat}\hskip1pt}}
\def\TC{{\rm{TC}\hskip1pt}}
\def\wTC{{\rm{wTC}\hskip1pt}}
\def\cuplength{{\rm{cuplength}\hskip1pt}}
\def\Z{{\mathbb{Z}}}
\def\cat{{\rm{cat}\hskip1pt}}
\def\wcat{{\rm{wcat}\hskip1pt}}
\def\secat{{\rm{secat}\hskip1pt}}
\def\nil{{\rm{nil}\hskip1pt}}
\def\TC{{\rm{TC}\hskip1pt}}
\def\wTC{{\rm{wTC}\hskip1pt}}
\def\zcl{{\rm{zcl}\hskip1pt}}
\begin{document}

\title{A study of the Ganea conjecture for topological complexity by using weak topological complexity}
%\footnotetext{Support has been provided by
%FEDER through the Ministerio de Economía y Competitivad project
%MTM2013-41768-P  and ``Programa Operacional Factores
%de Competitividade - COMPETE'',
% and by FCT -\emph{Funda\c{c}\~{a}o para a Ci\^{e}ncia e a Tecnologia} through
% projects Est-C/MAT/UI0013/2011 and PTDC/MAT/0938317/2008.}

\author{ J.M. Garcia
Calcines\footnote{Universidad de La Laguna, Facultad de
Matem\'aticas, Departamento de Matem\'aticas, Estadistica e I.O. and Instituto de Matem\'aticas y Aplicaciones (IMAULL), 38271 La Laguna, Spain. E-mail:
 \texttt{jmgarcal@ull.es}}
\,and L. Vandembroucq\footnote{Centro
 de Matem\'{a}tica, Universidade do Minho, Campus de Gualtar,
 4710-057 Braga, Portugal. E-mail: \texttt{lucile@math.uminho.pt}}
}

%\author{}
\date{\today}

\maketitle

\begin{abstract}
{In this paper, we provide sufficient conditions for a space $X$ to satisfy the Ganea conjecture for topological complexity. To achieve this, we employ two auxiliary invariants: weak topological complexity in the sense of Berstein-Hilton, along with a certain stable version of it. Several examples are discussed.}
\end{abstract}

\section*{Introduction}

The topological complexity of a space $X$, denoted $\mbox{TC}(X),$ is defined as the sectional category (or Schwarz genus) of the evaluation fibration $\pi _X:X^I\rightarrow X\times X,$
$$\pi (\alpha ):=(\alpha (0),\alpha(1))$$
This numerical homotopy invariant was defined by M. Farber \cite{Far} in order to study the measure of discontinuity of any motion planning algorithm in Robotics. Topological complexity is related to Lusternik-Schnirelmann category, as the latter may be also defined as the sectional category of the path fibration $PX\rightarrow X$.

In this paper, our focus is on exploring the Ganea conjecture in the context of topological complexity, which is a variation of the classical Ganea conjecture for Lusternik-Schnirelmann category \cite{Ganea1}. The original Ganea conjecture posed the question of whether the equality $\mbox{cat}(X\times S^k)=\mbox{cat}(X)+1$ holds true for any finite CW-complex and $k\geq 1$. The affirmation of this equality is known as the classical Ganea conjecture, which remained open until Iwase \cite{Iwase} provided a set of counterexamples refuting it. Nonetheless, over the years of studying this conjecture, several positive outcomes have emerged. In particular, the equality $\mbox{cat}(X\times S^k)=\mbox{cat}(X)+1$ has been established for simply connected rational spaces (\cite{Hess}, \cite{Jessup}) and for certain classes of manifolds and CW-complexes (\cite{Rudyak}, \cite{Singhof}, \cite{strom-ls-on-manifolds}). In the sequel of Iwase's result, further works have been realized (for instance, \cite{Ainfinity}, \cite{Stanley}, \cite{Stanley2}, \cite{Qcat}) in order to exhibit additional classes of counter-examples and of spaces satisfying the Ganea conjecture.

The $\TC$-Ganea conjecture, or Ganea conjecture for topological complexity, seeks to determine whether the following equality holds:
$$\mbox{TC}(X\times S^k)=\mbox{TC}(X)+\mbox{TC}(S^k)$$
This conjecture explores the relationship between the topological complexity of a finite CW-complex $X$ and the topological complexity of the product space $X\times S^k$, where $k\geq 1$. It is worth noting that $\TC(S^k)$ equals 1 when $k$ is odd and 2 when $k$ is even.
Significant progress has been made in establishing positive results for this conjecture in the realm of rational homotopy theory. Jessup, Murillo, and Parent \cite{JMP} as well as Carrasquel \cite{carrasquel} have made noteworthy advancements in this direction.
Conversely, in their work \cite{GonzalezGrantV}, J. Gonzalez, M. Grant, and L. Vandembroucq provided a counterexample for the case where $k$ is even. However, the situation for odd values of $k$ remains unresolved and awaits further investigation.

In this paper, we aim to establish sufficient conditions for a space $X$ to satisfy the $\TC$-Ganea conjecture. To accomplish this, we initially focus on the case where the dimension is odd, that is, $k=2n+1$, and build on the approach developed by Strom in \cite{strom-ls-on-manifolds} where the Berstein-Hilton weak-category was used as an auxiliary invariant. Here our auxiliary invariant will be the "weak topological complexity" ($\mbox{wTC}(-)$), which was introduced in \cite{Weaksecat}.
Our first main result, as stated in Theorem \ref{result}, demonstrates that under certain conditions, we have $\wTC(X\times S^{2n+1})\geq \wTC(X)+1$. To prove this result, we utilize the crucial relationship between weak topological complexity and the weak category of the homotopy cofiber of the diagonal map $\Delta :X\rightarrow X\times X$, which was established in  \cite{Weaksecat}.
By establishing the inequality $\wTC(X\times S^{2n+1})\geq \wTC(X)+1$, we can deduce that $\TC(X\times S^{2n+1})=\TC(X)+1$, provided that the approximation $\wTC(X)$ is optimal, meaning that $\wTC(X)=\TC(X)$.
By employing a similar strategy for the even dimensional case, we are able to derive our second key result, as presented in Theorem \ref{result2}. However, in the case of even dimensions, we need to introduce a stable version of the auxiliary invariants, namely $\wcat$ and $\wTC$, beforehand.
Subsequently, we delve into the study of a specific class of two-cell complexes and utilize Hopf invariants to establish conditions that guarantee the equality between our auxiliary invariants and $\TC$. This exploration leads us to identify classes of spaces that satisfy the $\TC$-Ganea conjecture. It is worth noting that Hopf invariants have played a pivotal role in Iwase's disproof of the classical Ganea conjecture \cite{Iwase}. In the context of topological complexity, techniques involving Hopf invariants have been developed in \cite{GonzalezGrantV} and \cite{GGV2}, with particular applications to the class of two-cell complexes $X=S^p\cup_{\alpha}e^{q+1}$ in the metastable range ($2p-1 < q < 3p-2$). It is within this class of spaces that we focus our study, making use of results from the aforementioned articles \cite{GonzalezGrantV} and \cite{GGV2}.

The paper is organized as follows. In the first section, we provide essential notation, definitions, and results that will be crucial for our main findings. This includes a review of the concepts of Berstein-Hilton weak category and weak topological complexity, along with their key properties. Additionally, we present relevant information on Berstein-Hilton Hopf invariants and the homotopy cofiber of the diagonal map for spheres. The second section of the paper focuses on presenting the proofs of our main results. Here, we establish sufficient conditions under which the $\TC$-Ganea conjecture is satisfied. We provide detailed explanations and justifications for these conditions. Finally, the paper concludes with an application of our results to the study of two-cell complexes in the metastable range. We demonstrate how our findings can be applied to this specific class of spaces and discuss the implications of our results in this context.

\section{Preliminaries}\label{preliminar}

Throughout the paper, we shall work in the category of well-pointed
compactly generated Hausdorff spaces. We assume that the
reader is familiarized with the notions of sectional category
and its main specializations: topological
complexity and Lusternik-Schnirelmann category. For fundamental aspects on these invariants see \cite{C-L-O-T}, \cite{Sch}, \cite{Far} .

\subsection{Weak category and weak topological complexity}

We here recall some material from \cite{Weaksecat}, referring the reader to this paper for more details.

Recall that, given any map $f:E\rightarrow B$, there is a
Whitehead-type characterization of sectional category, which is
given as follows. Consider the \emph{$n$-sectional fatwedge} of
any map $f:E\rightarrow B,$ $\kappa _n:T^n(f)\rightarrow B^{n+1},$
inductively defined by starting with $\kappa _0=f:E\rightarrow B$
and considering the (fibrewise) join of $\kappa _{n-1}\times id_B$ and
$id_{B^n}\times f.$
Then $\secat(f)\leq n$ if and only
if there is, up to homotopy, a lift of the $(n+1)$-diagonal map $\Delta_{n+1}=\Delta^B_{n+1}:B \to B^{n+1}$.
$$\xymatrix{
{} & {T^n(f)} \ar[d]^{\kappa _n} \\
{B} \ar@{.>}[ur] \ar[r]_{\Delta _{n+1}} & {B^{n+1}} }$$

For any $n\geq 0$, we shall denote by $B^{\wedge n}$ the $n$-fold
smash-product, by $q_{n}=q^B_{n}:B^{n}\to B^{\wedge n}$ the
identification map and by $\bar \Delta_{n}=\bar \Delta^B_{n}:B
\stackrel{\Delta_{n}}{\to} B^{n}\stackrel{q_{n}}{\to} B^{\wedge
n}$ the reduced $n$-diagonal. Recall that the weak category of $B$,
$\wcat(B)$, introduced by Berstein and Hilton \cite{B-H}, is the
least integer $n$ such that $\bar \Delta_{n+1}$ is homotopically
trivial. This is a lower bound for the L-S category of $B$ since
$\cat(B)\leq n$ if and only if the diagonal $\Delta^B_{n+1}:B\to
B^{n+1}$ lifts, up to homotopy, in the fatwedge
$T^n(B)=\{(b_0,b_1,...,b_n)\in B^{n+1}:b_i=*,\hspace{3pt}
\mbox{for some}\hspace{3pt}i\}$ and $B^{\wedge
(n+1)}=B^{n+1}/T^n(B).$

Let $f: E\to B$ be a map and, for any integer $n$, let
$C_{\kappa_n}$ be the homotopy cofibre of the $n$-sectional fat
wedge of $f$, $\kappa _n:T^n(f)\rightarrow B^{n+1}$. If
$l_n:B^{n+1}\rightarrow C_{\kappa_n}$ denotes the induced map,
then the weak sectional category of $f$, denoted by
$\mbox{wsecat}(f)$, is the least integer $n$ (or $\infty$) such
that the composition
$$\xymatrix{ B\ar[r]^{\Delta_{n+1}}& B^{n+1}\ar[r]^{l_n}&
C_{\kappa_n}}$$ is homotopically trivial. This is a lower bound of
sectional category whose most important properties are summarized
in the following result.

\begin{prop}\label{properties}\cite[Th. 21]{Weaksecat}
Let $f: E\to B$ be a map and $C_f$ be its homotopy cofibre. Then
\begin{enumerate}
\item[(a)] $\max\{\wcat(C_f)-1,\nil \ker f^*\} \leq \wsecat(f)\leq \min\{\wcat(B),\wcat(C_f)\}$
%\item[(b)] $\wsecat(p)\leq \wcat(C_p)$
%\item[(c)] $\wsecat(p)\geq \wcat(C_p)-1$
%\item[(d)] $\wsecat(p)\geq \nil \ker p^*$ (for any commutative ring $\pi$)
\item[(b)] If the map $f: E\to B$ admits a homotopy retraction (i.e.,
there is a map $r:B\rightarrow E$ such that $rf\simeq 1_E$), then
$$\wsecat(f)=\wcat(C_f)\hspace{5pt}\mbox{and}\hspace{5pt}\nil \ker f^*=\cuplength(C_f).$$
\end{enumerate}
\end{prop}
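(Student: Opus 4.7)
The plan is to establish each of the four bounds in statement (a) separately and then derive (b) by exploiting the retraction. For the upper bound $\wsecat(f)\leq \wcat(B)$, the key step is to construct a map $\varphi:B^{\wedge(n+1)}\to C_{\kappa_n}$ satisfying $\varphi\circ q_{n+1}=l_n$, so that $l_n\circ \Delta_{n+1}=\varphi\circ \bar \Delta_{n+1}$ inherits the null-homotopy of $\bar\Delta_{n+1}$. To obtain $\varphi$ it suffices to show that the LS-fatwedge inclusion $T^n(B)\hookrightarrow B^{n+1}$ lifts (up to homotopy) through $\kappa_n: T^n(f)\to B^{n+1}$; on each component of $T^n(B)$ where the $i$-th coordinate is $*$, the iterated fibrewise-join construction provides a canonical choice of lift, namely the point that uses the basepoint of $E$ in the $i$-th factor. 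An entirely parallel argument gives $\wsecat(f)\leq \wcat(C_f)$: one compares $C_{\kappa_n}$ with $C_f^{\wedge(n+1)}$ using the cofibre inclusion $j:B\to C_f$; since $j\circ f\simeq *$, the composite $j^{n+1}\circ \kappa_n$ factors (up to homotopy) through the LS-fatwedge of $C_f$, hence $q_{n+1}^{C_f}\circ j^{n+1}$ factors as $\psi\circ l_n$ for some $\psi:C_{\kappa_n}\to C_f^{\wedge(n+1)}$, and the conclusion follows from the naturality square $j^{\wedge(n+1)}\circ \bar\Delta_{n+1}^B=\bar\Delta_{n+1}^{C_f}\circ j$.

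For the lower bound $\wcat(C_f)-1\leq \wsecat(f)$, I would mimic the classical Ganea-type inequality $\cat(C_f)\leq \secat(f)+1$ in its weak form: starting from a null-homotopy of $l_n\circ \Delta_{n+1}^B$ and using the cofibre sequence $E\to B\to C_f$, one produces a null-homotopy of $\bar\Delta_{n+2}^{C_f}$ by a suspension-coordinate argument that introduces one additional smash factor. For $\nil\ker f^*\leq \wsecat(f)$, the argument is cohomological: the cofibre sequence $T^n(f)\to B^{n+1}\to C_{\kappa_n}$ identifies $\tilde H^*(C_{\kappa_n})$ with those classes in $H^*(B^{n+1})$ that restrict trivially to $T^n(f)$, and any external product $u_0\times\cdots\times u_n$ with $u_i\in \ker f^*$ belongs to this subgroup; applying $(l_n\circ \Delta_{n+1})^*$ shows that $u_0\cup\cdots\cup u_n=0$ in $H^*(B)$.

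For (b), a homotopy retraction $r:B\to E$ makes $f^*$ surjective, so the cohomology long exact sequence of the cofibre splits and $j^*:\tilde H^*(C_f)\to H^*(B)$ identifies $\tilde H^*(C_f)$, as a graded ring, with the ideal $\ker f^*$; this yields $\cuplength(C_f)=\nil\ker f^*$ directly. For the homotopical equality $\wsecat(f)=\wcat(C_f)$, the retraction permits the comparison map $T^n(B)\to T^n(f)$ produced in the first step to be upgraded: in fact, $f$ behaves like a cofibration (up to homotopy), so one can construct a map $C_{\kappa_n}\to C_f^{\wedge(n+1)}$ in the reverse direction and show that a null-homotopy of $l_n\circ \Delta_{n+1}^B$ transports to one of $\bar\Delta_{n+1}^{C_f}$, thus matching the bound in (a).

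The main obstacle is producing the comparison maps between $T^n(f)$, $T^n(B)$ and $T^n(C_f)$, and the corresponding maps between their cofibres, in a way that makes the relevant diagrams commute strictly enough for the cofibre universal property to be applied. This is the genuine technical content of the statement; once the comparison maps are in place, all the inequalities follow by chasing the induced maps on cofibres and invoking the Whitehead-type characterization recalled before the proposition.
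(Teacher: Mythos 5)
First, note that the paper does not prove this proposition: it is quoted from \cite[Th.~21]{Weaksecat}, so your attempt has to be measured against the proof there (an echo of which appears later in this paper, in the proof of the analogous statement for $\sigma\wsecat(-;2)$). Your overall architecture is the standard one, and several pieces are essentially right: the map $T^n(B)\to T^n(f)$ over $B^{n+1}$ induced by $\ast\to E$ gives $\varphi:B^{\wedge(n+1)}\to C_{\kappa_n}$ with $\varphi\circ q_{n+1}\simeq l_n$ and hence $\wsecat(f)\leq \wcat(B)$; the cohomological argument for $\nil\ker f^*\leq\wsecat(f)$ is correct; and in (b) the identification of $\tilde H^*(C_f)$ with the ideal $\ker f^*$ via $j^*$ (the long exact sequence splits because $f^*$ is surjective) does give $\cuplength(C_f)=\nil\ker f^*$.

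The genuine gap is in your argument for $\wsecat(f)\leq\wcat(C_f)$. You construct $\psi:C_{\kappa_n}\to C_f^{\wedge(n+1)}$ with $\psi\circ l_n\simeq q_{n+1}^{C_f}\circ j^{n+1}$, but this only yields the implication $l_n\Delta_{n+1}^B\simeq\ast\Rightarrow \bar\Delta_{n+1}^{C_f}\circ j\simeq\ast$; to deduce $\wsecat(f)\leq n$ from $\bar\Delta_{n+1}^{C_f}\simeq\ast$ you need the \emph{converse}, and the mere existence of $\psi$ does not give it. The actual proof in \cite{Weaksecat} shows that the square comparing $\kappa_n:T^n(f)\to B^{n+1}$ with the fatwedge inclusion $T^n(C_f)\to C_f^{n+1}$ is a homotopy pushout (this uses the fibrewise-join structure and $jf\simeq\ast$), so that $\psi$ is a homotopy \emph{equivalence} and $\wsecat(f)\leq n$ if and only if $j^{\wedge(n+1)}\bar\Delta^B_{n+1}\simeq\ast$. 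This equivalence is the engine of the whole theorem; you correctly flag the comparison maps as the technical content, but it is their being equivalences, not their existence, that makes the inequalities follow. Relatedly, two further steps are only gestured at: the bound $\wcat(C_f)-1\leq\wsecat(f)$ requires factoring $\bar\Delta^{C_f}_{n+1}$ through the connecting map $\delta:C_f\to\Sigma E$ and then invoking the coaction lemma which kills the extra smash factor $(\delta\wedge\mathrm{id})\circ\bar\Delta_2^{C_f}\simeq\ast$; and in (b) the homotopical equality rests on the observation that a homotopy retraction forces $\delta\simeq(\Sigma r)(\Sigma f)\delta\simeq\ast$, so that any map factoring through $\delta$ is trivial --- neither of these is actually supplied in your sketch.
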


Here $\nil \ker f^*$ denotes the nilpotency of the kernel of the
morphism $f^*$ which is induced by $f$ in cohomology (where the coefficients can be taken in any commutative ring). This is a
classical lower bound for sectional category \cite{Sch}. As a consequence of Proposition \ref{properties}(a), we note that $\wsecat(f)=\mbox{secat}(f)$ when
$\mbox{secat}(f)=\nil \ker f^*.$ Another conditions which ensure the equality $\wsecat(f)=\mbox{secat}(f)$ are given in the following proposition established in \cite{Weaksecat}:

\begin{prop}\label{secat-wsecat}
Let $f:E\rightarrow B$ be any map, where $E$ and $B$ are
$(p-1)$-connected CW-complexes ($p\geq 1$). If $\dim(B)=N$ and
either one of the following conditions is satisfied
\begin{enumerate}
\item[(i)] $N\leq p(\wsecat(f)+2)-2$

\item[(ii)] $N\leq p(\secat(f)+1)-2,$
\end{enumerate}

\noindent then $\secat(f)=\wsecat(f).$
\end{prop}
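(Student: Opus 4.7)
The strategy is a single Blakers--Massey upgrade: promote the null-homotopy that witnesses $\wsecat(f)\leq n$ to a genuine homotopy section of $\kappa_n$, then apply it with different choices of $n$ to deduce (i) and (ii).

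The connectivity inputs are as follows. Since $E$ and $B$ are $(p-1)$-connected, the long exact sequence of $f$ forces the homotopy fibre $F$ of $f$ to be $(p-2)$-connected. The inductive fibrewise-join description of $\kappa_n:T^n(f)\to B^{n+1}$ identifies its homotopy fibre with the $(n+1)$-fold join $F^{*(n+1)}$, which is $((n+1)p-2)$-connected. Hence the map $\kappa_n$ is $((n+1)p-1)$-connected, the pair $(B^{n+1},T^n(f))$ is $((n+1)p-1)$-connected, and $T^n(f)$ is itself $(p-1)$-connected. Since $l_n\circ \kappa_n\simeq *$ by construction of the cofibre, there is a canonical comparison map $\iota: T^n(f)\to \mathrm{hofib}(l_n)$, and Blakers--Massey applied to the homotopy cofibre sequence $T^n(f)\to B^{n+1}\to C_{\kappa_n}$ yields that $\iota$ is at least $(p-1)+((n+1)p-1) = p(n+2)-2$-connected.

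Now take any $n$ for which $l_n\circ\Delta_{n+1}\simeq *$; a choice of null-homotopy lifts $\Delta_{n+1}$ to a map $\widetilde\Delta:B\to \mathrm{hofib}(l_n)$. If $\dim B\leq p(n+2)-2$, the obstructions to further lifting $\widetilde\Delta$ through the $(p(n+2)-2)$-connected $\iota$ all vanish, producing a homotopy section of $\kappa_n$, i.e.\ $\secat(f)\leq n$. For (i), apply this with $n=\wsecat(f)$: the hypothesis is exactly $N\leq p(\wsecat(f)+2)-2$, giving $\secat(f)\leq\wsecat(f)$ and hence equality. For (ii), suppose for contradiction that $\wsecat(f)\leq \secat(f)-1$ and run the upgrade with $n=\secat(f)-1$; the required dimension bound is $N\leq p((\secat(f)-1)+2)-2 = p(\secat(f)+1)-2$, which is (ii), so we obtain $\secat(f)\leq \secat(f)-1$, a contradiction, and again we conclude $\secat(f)=\wsecat(f)$.

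The main obstacle is the precise Blakers--Massey estimate for $\iota$: one has to keep track of three connectivities (of $T^n(f)$, of the pair $(B^{n+1},T^n(f))$, and of the cofibre $C_{\kappa_n}$) and of the correct convention for a ``$k$-connected map'', in order to arrive at exactly the stated numerical thresholds rather than an off-by-one variant. A secondary technical point is the replacement of $\kappa_n$ by a Hurewicz fibration (for the join-fibre identification) and simultaneously by a cofibration (for the homotopy cofibre sequence), which is routine but worth doing carefully in the well-pointed compactly generated setting.
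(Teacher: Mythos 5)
Your argument is correct and is essentially the proof of the cited reference \cite{Weaksecat} (the present paper does not reprove this statement, it only imports it): identify the fibre of $\kappa_n$ with $F^{*(n+1)}$, hence $\kappa_n$ is a $((n+1)p-1)$-equivalence, apply Blakers--Massey to the cofibre sequence $T^n(f)\to B^{n+1}\to C_{\kappa_n}$ to get that $T^n(f)\to \mathrm{hofib}(l_n)$ is a $(p(n+2)-2)$-equivalence, and lift the map $B\to \mathrm{hofib}(l_n)$ coming from a null-homotopy of $l_n\Delta_{n+1}$; your thresholds in (i) and (ii) come out exactly right. The one point to flag is $p=1$: there the Blakers--Massey step requires care (the spaces involved are merely connected), which is why \cite{Weaksecat} states the result for $p\geq 2$; as the paper remarks, the case $p=1$ is instead disposed of by the trivial chain $\wsecat(f)\leq \secat(f)\leq \cat(B)\leq N$, which under (i) forces all these quantities to be equal and renders (ii) vacuous.
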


Note that in \cite{Weaksecat} this result was stated for $p\geq 2$. Since we always have $\secat(f)\leq \cat(B)\leq N$, the statement is actually also true for $p=1$.

\bigskip
In this paper, we are more interested in the particular case of \emph{weak topological
complexity}, denoted by $\mbox{wTC}(X).$ This is a lower bound of topological
complexity defined as $\mbox{wTC}(X):=\mbox{wsecat}(\pi )$ where $\pi :X^I\rightarrow
X\times X$ denotes the evaluation path fibration. As $\pi $ is the fibration associated with the diagonal map $\Delta :X\rightarrow
X\times X,$ we also have that $\mbox{wTC}(X)=\mbox{wsecat}(\Delta
).$ As a consequence of Theorem \ref{properties} we have the following equality:
$$\mbox{wTC}(X)=\mbox{wcat}(C_{\Delta }(X))$$ \noindent where
$C_{\Delta }(X)$ stands for the homotopy cofibre of the diagonal map
$\Delta :X\rightarrow X\times X$. We note that the equality $\mbox{TC}(X)=\mbox{cat}(C_{\Delta }(X))$ holds for several classes of spaces (see \cite{GC-V}, \cite{GGV2}). However, it is not true in general as established by Dranishnikov (\cite{Dr}).

%As we will use this construction
%for several spaces, we will consider the more specific notation
%$C_{\Delta }(X)$ to stress the fact that we are using the diagonal
%map in the space $X.$ This way we shall avoid ambiguity in the
%notation.

As a direct consequence of Proposition \ref{secat-wsecat}, we
have the following corollary:

\begin{cor}\label{resul3}{\rm
Let $X$ be any $(p-1)$-connected CW-complex $(p\geq 1$). If
$\mbox{dim}(X)=N$ and either one of the following conditions is
satisfied:
\begin{enumerate}
\item[(i)] $N\leq (p(\mbox{wTC}(X)+2))/2-1;$ or

\item[(ii)] $N\leq (p(\mbox{TC}(X)+1))/2-1.$
\end{enumerate}
\noindent Then $\mbox{TC}(X)=\mbox{wTC}(X).$}
\end{cor}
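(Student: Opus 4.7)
The plan is to apply Proposition \ref{secat-wsecat} directly to the diagonal map $f=\Delta:X\to X\times X$. Recall from the preceding discussion that $\TC(X)=\secat(\Delta)$ and $\wTC(X)=\wsecat(\Delta)$, so it suffices to verify the hypotheses of Proposition \ref{secat-wsecat} for this particular $f$ and to rewrite the numerical bounds in terms of $\dim(X)$ rather than $\dim(X\times X)$.

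First I would check the connectivity and dimension conditions. Both the source $X$ and the target $B=X\times X$ are CW-complexes; since $X$ is $(p-1)$-connected, a product of $(p-1)$-connected spaces is $(p-1)$-connected, so $X\times X$ is $(p-1)$-connected as well. The dimension of the target is $\dim(X\times X)=2N$, where $N=\dim(X)$.

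Next I would substitute $B=X\times X$, $\dim(B)=2N$, $\secat(f)=\TC(X)$ and $\wsecat(f)=\wTC(X)$ into the two conditions of Proposition \ref{secat-wsecat}. Condition (i) of the proposition reads $2N\leq p(\wTC(X)+2)-2$, which is equivalent, after dividing by $2$, to the bound $N\leq p(\wTC(X)+2)/2-1$ of the corollary. Condition (ii) reads $2N\leq p(\TC(X)+1)-2$, equivalent to $N\leq p(\TC(X)+1)/2-1$. Under either condition, Proposition \ref{secat-wsecat} yields $\secat(\Delta)=\wsecat(\Delta)$, that is, $\TC(X)=\wTC(X)$.

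There is no substantial obstacle here: the corollary is a direct specialization of Proposition \ref{secat-wsecat}, obtained by halving the dimensional bound because the relevant target is the square $X\times X$ rather than $X$ itself. The only point that deserves a brief verification is that the product $X\times X$ inherits both the $(p-1)$-connectivity and a CW-structure of dimension $2N$ from $X$, so that Proposition \ref{secat-wsecat} genuinely applies.
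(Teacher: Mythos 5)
Your proof is correct and is exactly the argument the paper intends: the corollary is stated there as a ``direct consequence'' of Proposition \ref{secat-wsecat}, obtained by specializing to the diagonal map $\Delta:X\to X\times X$ (equivalently the fibration $\pi$), noting that $X\times X$ is a $(p-1)$-connected CW-complex of dimension $2N$, and halving the resulting bounds. Your additional check of the connectivity and CW-structure of $X\times X$ is the only point needing verification, and you handle it correctly.
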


\begin{rem}{\rm
As pointed out for the general case of sectional category, we also
have a cohomological condition ensuring the equality $\mbox{TC}(X)=\mbox{wTC}(X).$
Indeed, if $\tilde H^*$ stands for the reduced cohomology with
coefficients in a field $\Bbbk$, then
$\nil \ker \Delta ^*$ is exactly $\zcl_{\Bbbk}(X)$, the \textit{zero divisor cup length}  introduced by Farber in \cite{Far}. Then, from Proposition
\ref{properties}(b) we have that $\mbox{TC}(X)=\mbox{wTC}(X)$ as
far as $\mbox{TC}(X)=\zcl_{\Bbbk}(X) .$ In \cite{Weaksecat} and \cite{GC-V} , we have shown that there exist spaces $X$ for which $\mbox{TC}(X)=\wTC(X)>\zcl_{\Bbbk}(X) .$ More precisely, the space $X=S^3\cup _{\alpha }e^7$, where $\alpha\in \pi_6(S^3)$ is the Blakers-Massey element (that is, $X$ is the 7-skeleton of
$\mbox{Sp}(2)$), satisfies
$${\rm zcl}_{\Bbbk}(X)=2 \quad \wTC(X)=\TC(X)=3.$$
}

\end{rem}

In the sequel, we will also use the following result (see \cite{Weaksecat}).

\begin{prop}\label{growth-of-wcat}
	Let $f:A\to B$ be a map between $(p-1)$-connected CW-complexes
	($p\geq 1$). If $f$ is an $r$-equivalence, $\wcat(A)\geq k$ and
	$\dim(A)\leq r+p(k-1)-1$ then $\wcat(B)\geq \wcat(A)$.
\end{prop}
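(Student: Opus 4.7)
The plan is to combine naturality of the reduced diagonal with a connectivity estimate for the smash power $f^{\wedge m}$, where $m:=\wcat(A)$. Since $\wcat(A)\geq k$ we have $m\geq k$, hence $\dim(A)\leq r+p(k-1)-1\leq r+p(m-1)-1$. From the naturality square
$$\xymatrix{A\ar[r]^{\bar\Delta_m^A}\ar[d]_f & A^{\wedge m}\ar[d]^{f^{\wedge m}}\\ B\ar[r]_{\bar\Delta_m^B} & B^{\wedge m}}$$
and the fact that $\bar\Delta_m^A$ is not null-homotopic (equivalent to $\wcat(A)\geq m$), we will conclude $\bar\Delta_m^B\not\simeq *$, i.e.\ $\wcat(B)\geq\wcat(A)$, provided the induced map $(f^{\wedge m})_*\colon [A,A^{\wedge m}]\to [A,B^{\wedge m}]$ is injective.

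The crux is therefore to show that $f^{\wedge m}$ is an $(r+p(m-1))$-equivalence. I would argue by induction on $m$ via the decomposition
$$f^{\wedge m}=\bigl(f\wedge 1_{B^{\wedge(m-1)}}\bigr)\circ\bigl(1_A\wedge f^{\wedge(m-1)}\bigr),$$
and the general principle that smashing a map with the identity of a $(q-1)$-connected well-pointed space raises its equivalence degree by $q$. The cleanest justification of that principle goes through the identification $C_{g\wedge 1_Z}\simeq C_g\wedge Z$, the smash-product connectivity estimate, and a Hurewicz/Whitehead argument converting homology equivalence to homotopy equivalence (legitimate since the smash of two connected spaces is simply connected). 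Applied to the two factors above, with $A$ being $(p-1)$-connected and $B^{\wedge(m-1)}$ being $(p(m-1)-1)$-connected, this yields two $(r+p(m-1))$-equivalences whose composite is $f^{\wedge m}$.

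Given the connectivity estimate, the standard obstruction-theoretic fact that an $n$-equivalence induces a bijection on $[W,-]$ whenever $\dim(W)\leq n-1$ supplies the required injectivity and closes the argument. The delicate point I anticipate is the low-connectivity regime $p=1$, where $A$ and $B$ are merely path-connected and the Hurewicz identification between the connectivity of $C_f$ and the equivalence degree of $f$ needs care. I would handle this by working cellularly with models in which $A$ has cells in dimensions $\geq p$ and the relative pair $(B,A)$ has cells in dimensions $\geq r+1$; the behaviour of cells under smashing then delivers the needed connectivity without invoking Hurewicz for possibly non-simply-connected spaces.
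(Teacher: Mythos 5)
Your argument is correct and is essentially the approach the paper takes: the paper defers the proof of this proposition to \cite{Weaksecat}, but its proof of the analogous Proposition \ref{growth-of-sigmawcat} uses exactly your ingredients — the naturality square for the reduced diagonal, the fact that $f^{\wedge m}$ is an $(r+p(m-1))$-equivalence, and the resulting injectivity of $(f^{\wedge m})_*$ on $[A,-]$ under the stated dimension bound. Your extra care about the connectivity bookkeeping for smash powers (via $C_{g\wedge 1_Z}\simeq C_g\wedge Z$ and the $p=1$ case) is sound and fills in exactly the details the paper leaves implicit.
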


%\begin{itemize}
%\item remark on stability (our condition imply stability)
%
%
%\end{itemize}

\subsection{Hopf invariants}

In \cite{B-H}, Berstein and Hilton have introduced two notions of (generalized) Hopf invariants to study the increment of the category/weak-category upon cell attachments. We will refer to these invariants as the $\cat$-Hopf invariants and \textit{crude} Hopf invariants, respectively. For the sake of simplicity, we will consider the following conditions, which are sufficient for our purpose. Let $X=K\cup_{\alpha }CS$ where $K$ is a $(p-1)$-connected CW complex with $p\geq 2$, $\cat(K)\leq k$ and $\dim (K) \leq (k+1)p-2$. We also assume that $S$ is a finite wedge of spheres of the same dimension $d$ satisfying $d\geq \dim(K)$.

Given an attaching map $\alpha$, the definition of the Berstein-Hilton Hopf invariants depends on the choice of a homotopy lifting $\phi:K\to T^k(K)$ of the diagonal map $\Delta^K_{k+1}:K\to K^{k+1}$. However, under the condition $\dim (K) \leq (k+1)p-2$, there is a unique such lifting and the Hopf invariants depend only on the homotopy class of $\alpha$. We denote by $H(\alpha)\in [(CS,S),(K^{k+1},T^k(K))]\cong [S,F_k(K)]$ the $\cat$-Hopf invariant of $\alpha$ (which can be seen as the obstruction to extend to $X$ the lifting $\phi$ of $\Delta^K_{k+1}$ -- see \cite{B-H} or \cite{C-L-O-T} for the precise definition). Here $F_k(K)$ denotes the homotopy fibre of the inclusion $T^k(K)\hookrightarrow K^{k+1}$. The crude Hopf invariant of $\alpha$ is given by $\bar H(\alpha):=(q_{k+1})_*H(\alpha)\in [\Sigma S,K^{\wedge(k+1)}]$ where $(q_{k+1})_*$ is the morphism induced by the identification map $(K^{k+1},T^k(K))\to (K^{\wedge(k+1)},\ast)$.

Under the conditions above, it follows from \cite{B-H} that
\begin{enumerate}
	\item $\cat(X) \leq k $ if, and only if, $H(\alpha)=0$,
	\item $\wcat(X) \leq k $ if, and only if, $\bar H(\alpha)=0$.
\end{enumerate}
As it will be useful in this work, we detail the second statement through the following lemma. We include a proof which will lead to a generalization in a subsequent section (Lemma \ref{crudeHsquare2}).
\begin{lem}\label{crudeHsquare} Let $K$ be a $(p-1)$-connected CW complex with $p\geq 2$, $\cat(K)\leq k$ and $\dim (K) \leq (k+1)p-2$. Let $d\geq \dim (K)$ and $\alpha: S\to K$ be a map where $S$ is a finite wedge of spheres $S^d$. Then, for $X=K\cup_{\alpha }CS$, there exists a homotopy commutative diagram
\[\xymatrix{X \ar[rr]^{\bar{\Delta}_{k+1}}\ar[d]_{\delta} && X^{\wedge (k+1)}\\
\Sigma S \ar[rr]_{\bar H(\alpha)}&& K^{\wedge (k+1)}\ar@{^(->}[u]}\]
where $\delta$ is the connecting map in the Puppe sequence of the cofibration $S\stackrel{\alpha}{\to}K\to X$. Moreover $\wcat(X)\leq k$ if, and only if, $\bar H(\alpha)=0$.
\end{lem}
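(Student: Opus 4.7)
The plan is to use $\cat(K)\leq k$ to produce a canonical null-homotopy of $\bar\Delta^K_{k+1}$ living inside $K^{\wedge(k+1)}$, extend it across $X$ via the cofibration $j:K\hookrightarrow X$, and then lift the resulting factorization through the inclusion $K^{\wedge(k+1)}\hookrightarrow X^{\wedge(k+1)}$ using a connectivity estimate. A comparison with the Berstein--Hilton obstruction-theoretic construction will identify the lift with $\bar H(\alpha)$.

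Concretely, choose a lifting $\phi:K\to T^k(K)$ of $\Delta^K_{k+1}$, afforded by $\cat(K)\leq k$ and unique up to homotopy under the dimension hypothesis. Since the composite $q_{k+1}\circ i:T^k(K)\to K^{\wedge(k+1)}$ is strictly constant at the basepoint (the fat wedge collapses under the smash-product quotient), the relation $i\phi\simeq\Delta^K_{k+1}$ yields, after applying $q_{k+1}$, a preferred null-homotopy $H:K\times I\to K^{\wedge(k+1)}$ of $\bar\Delta^K_{k+1}$. Post-composing with $j^{\wedge(k+1)}$ gives a null-homotopy of $\bar\Delta^X_{k+1}\circ j$ that remains in $K^{\wedge(k+1)}\subset X^{\wedge(k+1)}$. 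Using the homotopy extension property for the cofibration $j$, I would extend this homotopy to a homotopy of $\bar\Delta^X_{k+1}$ itself, ending at a map $\tilde f:X\to X^{\wedge(k+1)}$ with $\tilde f|_K$ constant; hence $\tilde f=g\circ\delta$ for some $g:\Sigma S\to X^{\wedge(k+1)}$.

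Now $X$ is obtained from $K$ by attaching $(d+1)$-cells, so $j$ is a $d$-equivalence, and a standard iterated smashing argument shows $j^{\wedge(k+1)}$ is at least a $(d+kp)$-equivalence. Since $\Sigma S$ is a wedge of $(d+1)$-spheres and $d+1<d+kp$ (using $p\geq 2$), the induced map $[\Sigma S,K^{\wedge(k+1)}]\to[\Sigma S,X^{\wedge(k+1)}]$ is a bijection, producing a unique $\bar g:\Sigma S\to K^{\wedge(k+1)}$ with $j^{\wedge(k+1)}\circ\bar g\simeq g$. Unwinding the definition $\bar H(\alpha)=(q_{k+1})_*H(\alpha)$, where $H(\alpha)\in[(CS,S),(K^{k+1},T^k(K))]$ is the obstruction to extending $\phi$ across the cone on $\alpha$, identifies $\bar g$ with $\bar H(\alpha)$.

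For the ``moreover'' part, $\bar H(\alpha)=0$ yields $\wcat(X)\leq k$ directly from the diagram. Conversely, if $\bar\Delta^X_{k+1}$ is null-homotopic, any chosen null-homotopy restricts to $K$ and lifts (by the same connectivity estimate, since $\dim(K\times I)\leq d+kp$) to a null-homotopy of $\bar\Delta^K_{k+1}$ in $K^{\wedge(k+1)}$; uniqueness of $\phi$ forces this lift to be homotopic to $H$, and its extension across $X$ exhibits $\bar H(\alpha)=0$. The main technical obstacle will be the identification of $\bar g$ with $\bar H(\alpha)$ in the third paragraph: this requires reconciling the obstruction-theoretic presentation of the Hopf invariant with the null-homotopy presentation I construct, and the dimension hypothesis $\dim K\leq(k+1)p-2$ is precisely what makes these two presentations consistent, by forcing uniqueness of $\phi$.
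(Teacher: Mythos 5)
Your proposal is essentially sound, but it diverges from the paper's proof in two places worth recording. For the diagram itself, the paper simply cites Berstein--Hilton (Proposition 3.7(ii)) and Lemma 6.33 of Cornea--Lupton--Oprea--Tanr\'e, whereas you reconstruct it by hand: a preferred null-homotopy of $\bar{\Delta}^K_{k+1}$ coming from the lift $\phi$, extension over $X$ by the homotopy extension property, factorization through $\delta$, and compression of the resulting class into $K^{\wedge (k+1)}$ via the $(kp+d)$-connectivity of the inclusion $K^{\wedge (k+1)}\hookrightarrow X^{\wedge (k+1)}$. Your connectivity estimates match the paper's, and the identification of $\bar g$ with $\bar H(\alpha)$, which you rightly flag as the technical crux, is precisely the content of the cited results, so nothing essential is missing there. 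For the converse of the ``moreover'' statement the routes genuinely differ: the paper deduces $\bar H(\alpha)\circ\delta\simeq\ast$ from injectivity of $[X,K^{\wedge (k+1)}]\to[X,X^{\wedge (k+1)}]$ (using $\dim X=d+1<kp+d$), then uses exactness of the Puppe sequence to factor $\bar H(\alpha)$ through $\Sigma\alpha:\Sigma S\to\Sigma K$, and kills that factor because $K^{\wedge (k+1)}$ is $((k+1)p-1)$-connected while $\dim\Sigma K\leq (k+1)p-1$. You instead compare null-homotopies, which can be made to work, but your justification ``uniqueness of $\phi$ forces this lift to be homotopic to $H$'' is the wrong reason: uniqueness of $\phi$ only makes $H(\alpha)$ well defined, whereas what you actually need is that any two null-homotopies of $\bar{\Delta}^K_{k+1}$ differ by an element of $[\Sigma K,K^{\wedge (k+1)}]$ (resp.\ $[\Sigma K,X^{\wedge (k+1)}]$), and this group vanishes for exactly the dimension-versus-connectivity reason the paper invokes at the end of its Puppe argument. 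With that substitution your converse closes; as written, the sentence asserts a uniqueness that does not follow from the uniqueness of $\phi$ alone.
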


\begin{proof}
The diagram can be deduced from \cite[Proposition 3.7(ii)]{B-H}, see also \cite[Lemma 6.33]{C-L-O-T}. It is clear that $\bar H(\alpha)=0$ implies that $\wcat(X)\leq k$. Conversely, let us suppose that $\wcat(X)\leq k$. Then $\bar{\Delta}_{k+1}\simeq \ast$. Note that the inclusion $K^{\wedge (k+1)}\hookrightarrow X^{\wedge (k+1)}$ is a $(kp+d)$-equivalence. Since $\dim(X)=d+1< kp+d$, we deduce from $\bar{\Delta}_{k+1}\simeq \ast$ that $\bar H(\alpha)\delta\simeq \ast$. Considering the Puppe sequence of the cofibration $S\stackrel{\alpha}{\to}K\to X$ we then get a map $\Sigma K \to  K^{\wedge (k+1)}$ making homotopy commutative the following diagram
	\[\xymatrix{X\ar[d]_{\delta} \\
		\Sigma S \ar[rr]^{\bar H(\alpha)}\ar[d]_{\Sigma \alpha}&& K^{\wedge (k+1)}\\
		\Sigma K\ar[rru]
	}\]
	Since $K^{\wedge (k+1)}$ is $(k+1)p-1$-connected and $\dim (K) \leq (k+1)p-2$ we see that this is map is homotopically trivial and consequently $\bar H(\alpha)=0$.	
\end{proof}

We will also use the following observations: 

\begin{rem}\label{rem-inv}{\rm
Let us consider the commutative diagram corresponding to the identification map $(K^{k+1},T^k(K))\to (K^{\wedge(k+1)},\ast)$. If we denote by $\bar q_{k+1}: F_k(K) \to \Omega K^{\wedge(k+1)}$ the induced map at the level of the homotopy fibers, then, through adjunction, $\bar H(\alpha)$ can be identified to the following composite:
\[
\Sigma S \stackrel{\Sigma H(\alpha)}{\longrightarrow} \Sigma F_k(K) \stackrel{\Sigma \bar q_{k+1}}{\longrightarrow } \Sigma \Omega K^{\wedge(k+1)} \stackrel{ev}{\longrightarrow } K^{\wedge(k+1)}
\]
where $ev$ is the evaluation map. When $K=S^p$ or more generally any ($p-1$)-connected CW-complex with $p\geq 2$, we can use a Blakers-Massey argument to see that the map
\[
\Sigma F_k(K) \stackrel{\Sigma \bar q_{k+1}}{\longrightarrow} \Sigma \Omega K^{\wedge(k+1)} \stackrel{ev}{\longrightarrow} K^{\wedge(k+1)}
\]
is a $p(k+2)-1$ equivalence.
}
\end{rem}

\begin{rem}\label{Hil-inv}{\rm
In the specific case of a map $\alpha: S \to S^p$ ($p\geq 2$), where $S$ is a finite wedge of spheres $S^d$ with $d < 3p-2$, the cat-Hopf invariant $H(\alpha)$ exhibits stability and is entirely determined by its projection $H_0(\alpha): S \to S^{2p-1}$ onto the bottom sphere of $$F_1(S^p)=\Omega S^p \ast \Omega S^p \simeq S^{2p-1} \vee S^{3p-2} \vee S^{3p-2} \vee \cdots$$
In this case we also have $\bar H(\alpha)=\Sigma H_0(\alpha)$.}
\end{rem}

\subsection{Cofibre of the diagonal map of spheres}

Recall that we denote the homotopy cofiber of the diagonal map $\Delta: X \rightarrow X \times X$ for any space $X$ as $C_{\Delta}(X)$. In this subsection, we will recall important facts on $C_{\Delta}(S^n)$ and its reduced diagonal $\bar{\Delta}: C_{\Delta}(S^n) \rightarrow C_{\Delta}(S^n) \ C_{\Delta}(S^n)$. 
%However, let us start by examining the connectivity of $C_{\Delta}(X)$ in a general context. In this regard, a well-known result offers sufficient conditions for a map between homotopy pushouts to be an $n$-equivalence.  Specifically, if we consider the homotopy commutative diagram shown below:
%$$\xymatrix{
%	{Y} \ar[d]_g & {X} \ar[l] \ar[r] \ar[d]^f & {Z} \ar[d]^h \\
%	{Y'} & {X'} \ar[r] \ar[l] & {Z'} }$$
%\noindent where $f$ is an $(n-1)$-equivalence, and $g$ and $h$ are $n$-equivalences, then the resulting map between the respective homotopy pushouts $P\rightarrow P'$ is known to be an $n$-equivalence. Applying this result, we deduce that $C_{\Delta }(X)$ is $(q-1)$-connected as long as $X$ is $(q-1)$-connected. In particular, $C_{\Delta }(S^n)$ is an $(n-1)$-connected space.

%\medskip
%If we assume $n\geq 2$, let us examine $C_{\Delta}(S^n)$ a little bit deeper. 
As shown in \cite[Prop. 28]{Weaksecat}), $C_{\Delta}(S^n)$ is homotopy equivalent to the adjunction space $S^n \cup _{[\iota _n,\iota _n]} e^{2n}$, where $[\iota _n,\iota _n]: S^{2n-1} \rightarrow S^n$ represents the Whitehead bracket of the identity map $\iota _n: S^n \rightarrow S^n$ with itself. 
%Another equivalent way to express this is through the following homotopy pushout:
%$$\xymatrix{
%	{S^{2n-1}} \ar[d] \ar[r]^{[\iota _n,\iota _n]} & {S^n} \ar[d] \\
%	{*} \ar[r] & {C_{\Delta}(S^n)} }$$
Taking this fact into consideration, we can easily obtain our next result:  

\begin{prop}\label{hc-sphere} The following statements hold:
	\begin{enumerate}
		\item[(a)]
		$\Sigma C_{\Delta}(S^n)\simeq S^{n+1}\vee S^{2n+1}$.
		
		\item[(b)] $\Sigma (C_{\Delta}(S^n)\wedge C_{\Delta}(S^n))\simeq S^{2n+1}\vee S^{3n+1}\vee S^{3n+1}\vee S^{4n+1}.$ In general, $\Sigma (C_{\Delta}(S^n)^{\wedge k})$ is a wedge of spheres where $S^{nk+1}$ is the one with minimun dimension, for all $k\geq 1$. 
	\end{enumerate}
\end{prop}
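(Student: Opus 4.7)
The plan is to build directly on the homotopy equivalence $C_{\Delta}(S^n)\simeq S^n\cup_{[\iota_n,\iota_n]}e^{2n}$ recalled just above the statement, together with the classical fact that suspensions of Whitehead products are null-homotopic.

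For part (a), I would note that $C_{\Delta}(S^n)$ is the mapping cone of $[\iota_n,\iota_n]\colon S^{2n-1}\to S^n$, so $\Sigma C_{\Delta}(S^n)$ is the mapping cone of $\Sigma[\iota_n,\iota_n]\colon S^{2n}\to S^{n+1}$. Since the suspension of a Whitehead product is trivial (a standard fact), the cone splits as the wedge
\[
\Sigma C_{\Delta}(S^n)\simeq \Sigma S^n\vee \Sigma^2 S^{2n-1}\simeq S^{n+1}\vee S^{2n+1}.
\]

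For the first assertion of part (b), I would use the standard natural equivalence $\Sigma(X\wedge Y)\simeq (\Sigma X)\wedge Y$ and part (a) to compute
\[
\Sigma\bigl(C_{\Delta}(S^n)\wedge C_{\Delta}(S^n)\bigr)\simeq (S^{n+1}\vee S^{2n+1})\wedge C_{\Delta}(S^n)\simeq \Sigma^{n+1}C_{\Delta}(S^n)\vee \Sigma^{2n+1}C_{\Delta}(S^n).
\]
Iterating part (a) yields $\Sigma^{k}C_{\Delta}(S^n)\simeq S^{n+k}\vee S^{2n+k}$ for every $k\geq 1$, and substituting $k=n+1$ and $k=2n+1$ gives exactly the four-fold wedge $S^{2n+1}\vee S^{3n+1}\vee S^{3n+1}\vee S^{4n+1}$.

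For the general statement about $\Sigma(C_{\Delta}(S^n)^{\wedge k})$, I would proceed by induction on $k$, with the case $k=1$ being part (a). For the inductive step, applying the same $\Sigma(X\wedge Y)\simeq \Sigma X\wedge Y$ trick gives
\[
\Sigma\bigl(C_{\Delta}(S^n)^{\wedge (k+1)}\bigr)\simeq \Sigma\bigl(C_{\Delta}(S^n)^{\wedge k}\bigr)\wedge C_{\Delta}(S^n),
\]
and by the inductive hypothesis the left factor is a wedge of spheres $\bigvee_i S^{a_i}$ with minimum $a_1=nk+1$. Smashing with $C_{\Delta}(S^n)$ distributes over the wedge and each $S^{a_i}\wedge C_{\Delta}(S^n)=\Sigma^{a_i}C_{\Delta}(S^n)\simeq S^{n+a_i}\vee S^{2n+a_i}$ is again a wedge of spheres by part (a). The smallest sphere appearing comes from the smallest summand, of dimension $n+a_1=n(k+1)+1$, which closes the induction.

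There is no real obstacle here; the only non-formal input is the vanishing of $\Sigma[\iota_n,\iota_n]$, which is classical. The only point that deserves care is keeping track of the minimum-dimensional summand in the inductive step, so as to identify it as $S^{nk+1}$ rather than merely as an $(nk)$-connected wedge.
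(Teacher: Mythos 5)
Your proof is correct and follows exactly the route the paper indicates: part (a) from the nullity of $\Sigma[\iota_n,\iota_n]$ applied to the mapping-cone description $C_{\Delta}(S^n)\simeq S^n\cup_{[\iota_n,\iota_n]}e^{2n}$, and part (b) from (a) together with the compatibility of suspension with smash products and wedges. The paper leaves these details as a one-line remark; your write-up simply fills them in, including the useful bookkeeping of the bottom sphere in the inductive step.
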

Observe that the suspension $\Sigma [\iota_n, \iota_n]$ is homotopical trivial so (a) is easily satisfied, whereas (b) is a direct consequence of (a) and the properties of the smash product and the suspension functors.

\bigskip
Let us delve into the analysis of the reduced diagonal map $$\bar{\Delta }:C_{\Delta }(S^n)\rightarrow C_{\Delta }(S^n)\wedge C_{\Delta }(S^n)$$ \noindent and its implications. By utilizing Lemma \ref{crudeHsquare} mentioned earlier, with $K=S^n$ and $\alpha =[\iota _n,\iota _n]$, we can decompose the reduced diagonal as follows:
$$\xymatrix{
	{C_{\Delta }(S^n)} \ar[d] \ar[rr]^(.4){\bar{\Delta }} & & {C_{\Delta }(S^n)\wedge C_{\Delta }(S^n)} \\
	{S^{2n}} \ar[rr]_{\bar{H}([\iota _n,\iota _n])} & & {S^{2n}} \ar[u] }$$
As observed in Remark \ref{Hil-inv}, $\bar{H}([\iota _n,\iota _n])$ is precisely the suspension of the map $$H_0([\iota _n,\iota _n]):S^{2n-1}\rightarrow S^{2n-1}.$$  Moreover, it is well-known that $H_0([\iota _n,\iota _n])=\pm h([\iota _n,\iota _n])\iota _{2n-1},$ where $h([\iota _n,\iota _n])\in \mathbb{Z}$ denotes the classical Hopf invariant of $[\iota _n,\iota _n]$, and that $h([\iota _n,\iota _n]])=0$ when $n$ is odd, and $h([\iota _n,\iota _n]])=2$ when $n$ is even.  Hence, $\bar{\Delta }:C_{\Delta} (S^n)\rightarrow C_{\Delta }(S^n)\wedge C_{\Delta }(S^n)$ is homotopically trivial when $n$ is odd. However, when $n$ is even, $\bar{H}([\iota _n,\iota _n]) :S^{2n}\rightarrow S^{2n}$ is a map whose degree is (up to sign) 2. Applying the suspension functor to the diagram above we can obtain another homotopy commutative diagram
$$\xymatrix{
	{S^{2n+1}\vee S^{n+1}} \ar[d] \ar[rr]^(.4){\Sigma \bar{\Delta }} & & {S^{2n+1}\vee S^{3n+1}\vee S^{3n+2}\vee S^{4n+1}} \ar@/^2.0pc/[d] \\
	{S^{2n+1}} \ar[rr]_{\pm 2Id } \ar@/^2.0pc/[u] & & {S^{2n+1}} \ar[u] }$$
\noindent where $\pm 2Id :S^{2n+1}\rightarrow S^{2n+1}$ denotes the map having degree $\pm $2, $S^{2n+1}\vee S^{3n+1}\vee S^{3n+2}\vee S^{4n+1}\rightarrow S^{2n+1}$ is a homotopy retraction of
$S^{2n+1}\rightarrow S^{2n+1}\vee S^{3n+1}\vee S^{3n+2}$ and $S^{2n+1}\rightarrow S^{2n+1}\vee S^{n+1}$ a homotopy section of $S^{2n+1}\vee S^{n+1}\rightarrow S^{2n+1}$.

\section{Product formulas}

In this section, we will explore the investigation of the Ganea conjecture concerning topological complexity. As previously mentioned, we will present sufficient conditions for a space $X$ to fulfill $\TC(X\times S^n)\geq \TC(X)+\TC(S^n)$ by employing the concepts of weak topological complexity and weak category in the sense of Berstein-Hilton, along with certain variations of such invariants. Considering the importance of the parity of the dimension in the sphere $S^n$, we have opted to analyze each case separately: odd-dimensional and even-dimensional. 

\subsection{Odd dimensional case}

To begin with, let us recall that for any space $X$ we can construct the homotopy cofiber of the diagonal map given by $$X\stackrel{\Delta }{\longrightarrow
}{X\times X}\stackrel{\rho _X}{\longrightarrow }C_{\Delta }(X)$$
\noindent where $\rho _X$ denotes the canonical map. It is worth noting that for any sphere $S^n$,
the projections $p_1:X\times S^n\rightarrow
X$ and $p_2:X\times S^n$ induce a natural map $\pi :C_{\Delta
}(X\times S^n)\rightarrow C_{\Delta }(X)\times C_{\Delta }(S^n).$
In fact, we can regard $\pi $ as the map induced by the homeomorphism
$id_X\times T\times id_{S^n}:X\times S^n\times X\times S^n \stackrel{\cong
}{\longrightarrow } X\times X\times S^n\times S^n$, where $T$ is
defined as $T(x,s):=(s,x)$. In particular, we have the following
commutative diagram
$$\xymatrix{
X\times S^n\times X\times S^n \ar[d]_{id_X\times T\times id_{S^n}}^{\cong}
\ar[rr]^{\rho_{X\times S^n}} &  & C_{\Delta}(X\times S^n) \ar[d]^{\pi }\\
X\times X\times S^n\times S^n \ar[rr]_{\rho_X \times \rho_{S^n}} &
& C_{\Delta}(X)\times C_{\Delta}(S^n) }$$

\vspace{-2.1cm}
\begin{flushright}
	$(*)$
\end{flushright}
\vspace{1cm}

%\begin{rem}\label{rem}{\rm 
The forthcoming lemma concerning the previously defined map $\pi$ is of great importance for our purposes. Its proof relies on the following well-known facts:
\begin{enumerate}
\item[(i)] If
$X\stackrel{f}{\longrightarrow }Y\stackrel{p}{\longrightarrow
}C_f$ is a homotopy cofibre sequence and $f$ admits a homotopy
retraction, then there exists $\sigma :\Sigma C_f\rightarrow
\Sigma Y$ a homotopy section of $\Sigma p.$
			
\item[(ii)] For any two connected spaces $X$, $Y,$ there is a natural homotopy equivalence
$\Sigma (X\times Y)\stackrel{\simeq}{\to} \Sigma X\vee \Sigma Y\vee \Sigma
(X\wedge Y).$ In particular, there exists a homotopy section for
the suspension of the projection map $X\times Y\rightarrow X\wedge Y.$
\end{enumerate}
%}
%\end{rem}

\begin{lem}\label{hsection}
The suspension $\Sigma \pi: \Sigma C_{\Delta}(X\times S^n)\to
\Sigma (C_{\Delta}(X)\times C_{\Delta}(S^n))$ of $\pi $ admits a
homotopy section.
\end{lem}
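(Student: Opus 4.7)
The plan is to exploit the commutative diagram $(*)$ from the statement. Write $\tau := id_X\times T\times id_{S^n}$ for the homeomorphism on the left; then suspending $(*)$ gives the identity
$$\Sigma \pi \circ \Sigma \rho_{X\times S^n} \simeq \Sigma(\rho_X\times \rho_{S^n})\circ \Sigma \tau.$$
Since $\Sigma \tau$ is a homotopy equivalence, any homotopy section $s'$ of $\Sigma(\rho_X\times \rho_{S^n})$ immediately produces a homotopy section
$$s := \Sigma \rho_{X\times S^n} \circ \Sigma \tau^{-1}\circ s'$$
of $\Sigma \pi$. The task therefore reduces to constructing such an $s'$.

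For any space $Z$, the diagonal $\Delta_Z:Z\to Z\times Z$ admits a retraction (either projection), so fact (i) gives a homotopy section $\sigma_Z$ of $\Sigma \rho_Z$; in particular we obtain $\sigma_X$ and $\sigma_{S^n}$. To build $s'$, I apply the natural splitting of (ii) to both the source and the target of $\rho_X\times \rho_{S^n}$. Under this splitting, $\Sigma(\rho_X\times \rho_{S^n})$ decomposes as the wedge
$$\Sigma \rho_X \,\vee\, \Sigma \rho_{S^n} \,\vee\, \Sigma(\rho_X\wedge \rho_{S^n}),$$
so it suffices to section each summand. The first two are handled by $\sigma_X$ and $\sigma_{S^n}$ respectively. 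For the smash-product summand I factor $\rho_X\wedge \rho_{S^n}$ as $(\rho_X\wedge id)\circ (id\wedge \rho_{S^n})$ and exploit the natural equivalences $\Sigma(A\wedge B)\simeq \Sigma A\wedge B\simeq A\wedge \Sigma B$ to place the suspension on the appropriate factor at each stage: $\sigma_X\wedge id$ then sections $\Sigma \rho_X\wedge id$, while $id\wedge \sigma_{S^n}$ sections $id\wedge \Sigma \rho_{S^n}$. Composing these two partial sections in reverse order yields a section of $\Sigma(\rho_X\wedge \rho_{S^n})$, and wedging the three sections produces $s'$.

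The main technical nuance lies in the bookkeeping of the identifications $\Sigma(A\wedge B)\simeq \Sigma A\wedge B\simeq A\wedge \Sigma B$ when one composes the two partial sections on the smash-product summand: one must check that after transporting back through these natural equivalences the composite really is a section of $\Sigma(\rho_X\wedge \rho_{S^n})$. This is a routine naturality check, but it is the only step where care is needed. Once it is done, the desired section $s$ of $\Sigma \pi$ is obtained from the first paragraph, completing the proof.
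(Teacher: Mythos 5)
Your proposal is correct and follows essentially the same route as the paper: reduce to sectioning $\Sigma(\rho_X\times\rho_{S^n})$ via the suspended diagram $(*)$, split it into the wedge $\Sigma\rho_X\vee\Sigma\rho_{S^n}\vee\Sigma(\rho_X\wedge\rho_{S^n})$ using the natural splitting (ii), and section the smash summand by factoring it through $(\Sigma\rho_X)\wedge id$ and $id\wedge\Sigma\rho_{S^n}$ with the sections $\sigma_X\wedge id$ and $id\wedge\sigma_{S^n}$ supplied by (i). The only (immaterial) difference is that you spell out the reduction through $\Sigma\tau^{-1}$ explicitly, which the paper leaves as "apply the suspension functor to $(*)$".
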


\begin{proof}
	
For any space $Z,$ we take $\sigma_Z:\Sigma C_{\Delta}(Z) \to \Sigma (Z\times Z)$ a homotopy section of
$\Sigma\rho_Z.$ Then,
by the following decomposition of the map  $\Sigma (\rho_X\wedge \rho_{S^n})$:

	%Taking into account the homotopy equivalence $\Sigma (\rho_X
	%\times \rho_{S^n})\simeq $
	%
	%By the Puppe equivalence, $\Sigma (\rho_X \times \rho_{S^n})$ is
	%naturally equivalent to $\Sigma\rho_X \vee \Sigma \rho_{S^n}
	%\Sigma \rho_X\wedge \rho_{S^n}$, if we denote by $\sigma _Z$ the
	%homotopy section of $\Sigma\rho_Z:\Sigma (Z\times Z) \to \Sigma
	%C_{\Delta}(Z)$

	%$$\xymatrix{
	%\Sigma(X\times X)\vee \Sigma(S^n\times S^n) \vee\Sigma(X\times
	%X)\wedge(S^n\times S^n)
	%\ar[d]^{\Sigma\rho_X \vee \Sigma \rho_{S^n} \Sigma \rho_X\wedge \rho_{S^n}}\\
	%\Sigma C_{\Delta}(X)\vee \Sigma C_{\Delta}(S^n) \vee \Sigma
	%C_{\Delta}(X)\wedge C_{\Delta}(S^n) }$$
	%For any $Z$,
	%$\Sigma\rho_Z:\Sigma (Z\times Z) \to \Sigma C_{\Delta}(Z)$ admits
	%a homotopy section that we denote by $\sigma_Z$. Then, through the
	%following decomposition,
$$\xymatrix{
\Sigma((X\times X)\wedge (S^n\times S^n))\ar[d]^{\simeq} \ar@/^9.0pc/[ddddd]^{\Sigma (\rho_X\wedge \rho_{S^n})}\\
(\Sigma(X\times X))\wedge (S^n\times S^n)\ar[d]^{(\Sigma \rho_X)\wedge id} \\
(\Sigma C_{\Delta}(X))\wedge (S^n\times S^n)\ar[d]^{\simeq}\ar@/^2.0pc/[u]^{\sigma_X\wedge id}\\
C_{\Delta}(X)\wedge \Sigma (S^n\times S^n)\ar[d]^{id\wedge \Sigma \rho_{S^n}} \\
C_{\Delta}(X) \wedge \Sigma C_{\Delta}(S^n)\ar[d]^{\simeq}\ar@/^2.0pc/[u]^{id\wedge \sigma_{S^n}}\\
\Sigma (C_{\Delta}(X) \wedge  C_{\Delta}(S^n)) }$$ 
\noindent we see that $\Sigma (\rho_X\wedge \rho_{S^n})$ admits a homotopy section. As we have the homotopy equivalence $\Sigma (\rho_X \times \rho_{S^n})\simeq \Sigma\rho_X \vee \Sigma
\rho_{S^n}\vee \Sigma (\rho_X\wedge \rho_{S^n})$ we easily obtain
a homotopy section for $\Sigma (\rho_X \times \rho_{S^n}).$  We complete the proof by applying the suspension
functor to the diagram (*) above.
\end{proof}

%We will also make use of the Freudenthal suspension theorem, which states that the suspension map $\Sigma :[X,Y]\rightarrow [\Sigma X,\Sigma Y]$ between the corresponding pointed homotopy sets is a bijection, whenever $X$ is a pointed CW-complex, $Y$ is a $(p-1)$-connected well-pointed space, and $\dim (X)\leq 2p-2$.

\medskip
We are now in a position to state and prove our first main result in this section, specifically when the dimension of the sphere is odd:

\begin{thm}\label{result} Suppose $X$ is a $(p-1)$-connected CW-complex satisfying $\wTC(X)=k\geq 2$. If $\dim(X)\leq pk-1,$ then we have
$$\wTC(X\times S^n)\geq \wTC(X)+1$$ \noindent for all $n$.  If, in addition,
	$\wTC(X)=\TC(X)$, then $\TC(X\times S^n)\geq \TC(X)+1$ and hence, $\TC(X\times S^n)= \TC(X)+1$ when $n$ is odd.
\end{thm}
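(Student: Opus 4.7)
The plan is to exploit the identification $\wTC(Z)=\wcat(C_{\Delta}(Z))$ to reduce the inequality to showing $\wcat(C_{\Delta}(X\times S^n))\geq k+1$. Setting $A=C_{\Delta}(X)$ and $B=C_{\Delta}(S^n)$, the hypothesis $\wTC(X)=k$ becomes $\wcat(A)=k$, so $\bar\Delta^A_k$ is non-null while $\bar\Delta^A_{k+1}$ is null, and the goal reduces to showing that $\bar\Delta_{k+1}^{C_{\Delta}(X\times S^n)}$ is non-null.

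First I would use Lemma \ref{hsection}, which provides a homotopy section $s$ of $\Sigma\pi$. Applying $\Sigma$ to the naturality square of the reduced diagonal with respect to $\pi$ and composing with $s$ yields
\[
\Sigma\bar\Delta_{k+1}^{A\times B}=\Sigma\pi^{\wedge(k+1)}\circ\Sigma\bar\Delta_{k+1}^{C_{\Delta}(X\times S^n)}\circ s.
\]
Since the suspension of a null map is null, showing that $\Sigma\bar\Delta_{k+1}^{A\times B}$ is non-null will force $\bar\Delta_{k+1}^{C_{\Delta}(X\times S^n)}$ to be non-null as well, giving the desired inequality $\wcat(C_{\Delta}(X\times S^n))\geq k+1$.

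Next, to detect non-nullity of $\bar\Delta_{k+1}^{A\times B}$, I would post-compose with the smash of projections $p_A^{\wedge k}\wedge p_B\colon (A\times B)^{\wedge(k+1)}\to A^{\wedge k}\wedge B$. A direct element-wise computation identifies this composition with
\[
A\times B\twoheadrightarrow A\wedge B\xrightarrow{\bar\Delta^A_k\wedge\mathrm{id}_B}A^{\wedge k}\wedge B,
\]
where the first arrow is the canonical quotient coming from the cofibration $A\vee B\hookrightarrow A\times B\to A\wedge B$. Using the James splitting $\Sigma(A\times B)\simeq\Sigma A\vee\Sigma B\vee\Sigma(A\wedge B)$, which provides a homotopy section for the suspension of this quotient, together with the stable splitting $\Sigma B\simeq S^{n+1}\vee S^{2n+1}$ from Proposition \ref{hc-sphere}(a), the problem reduces to showing that a sufficient iterated suspension of $\bar\Delta^A_k$ remains non-null.

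The main obstacle is precisely this last step: passing from the known unstable non-nullity of $\bar\Delta^A_k$ to non-nullity of its suspensions smashed against $B$. This is where the dimension hypothesis $\dim(X)\leq pk-1$ is used in combination with the $(p-1)$-connectivity of $X$: it keeps the dimensions of $A$ and of the smash products $A^{\wedge j}$ inside the range where Freudenthal-type arguments and Proposition \ref{growth-of-wcat} allow one to conclude that the relevant suspensions of $\bar\Delta^A_k$ do not vanish; the hypothesis $k\geq 2$ provides enough stable room for these estimates. Finally, assuming $\wTC(X)=\TC(X)$, the inequality just obtained together with $\wTC\leq\TC$, the general upper bound $\TC(X\times S^n)\leq\TC(X)+\TC(S^n)$, and $\TC(S^{2n+1})=1$ gives the chain $\TC(X)+1\leq\wTC(X\times S^{2n+1})\leq\TC(X\times S^{2n+1})\leq\TC(X)+1$, whence the equality $\TC(X\times S^{2n+1})=\TC(X)+1$.
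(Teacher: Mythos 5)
Your proposal is correct and follows essentially the same route as the paper: reduce to $\wcat$ of the cofibre of the diagonal, use the section of $\Sigma\pi$ from Lemma \ref{hsection} together with the section of the suspended quotient $A\times B\to A\wedge B$ to push the problem down to $\bar\Delta^A_k\wedge\mathrm{id}_B$, split $\Sigma C_\Delta(S^n)$ as $S^{n+1}\vee S^{2n+1}$, and invoke Freudenthal via the hypothesis $\dim(X)\leq pk-1$ (with $\dim C_\Delta(X)=2\dim(X)$ and $C_\Delta(X)^{\wedge k}$ being $(pk-1)$-connected) to get stable non-triviality of $\bar\Delta^A_k$. The only cosmetic difference is your mention of Proposition \ref{growth-of-wcat} in the last step, which is not needed there; the Freudenthal estimate alone suffices, exactly as in the paper.
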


%The projections $p_1:X\times S^n\to X$ and $p_2:X\times S^n \to
%S^n$ induce a map $\pi: C_{\Delta}(X\times S^n)\to C_{\Delta X}\times
%C_{\Delta}(S^n)$

\begin{proof}
By utilizing the identity $\wTC(X)=\wcat(C_{\Delta}(X))$, we can establish that the $k$-th reduced diagonal $\bar{\Delta}_k: C_{\Delta}(X) \to C_{\Delta}(X)^{\wedge k}$ is not trivial. Furthermore, due to the inequality $\dim(X) \leq pk-1$, the map $\bar{\Delta}_k$ is stably non-trivial, that is, $\Sigma \bar{\Delta}_k \not\simeq *$ (equivalently, $\Sigma ^m\bar{\Delta}_k \not\simeq *$, for all $m$). Indeed, we have $\dim(C_{\Delta}(X)) = 2\dim(X)$ and $C_{\Delta}(X)^{\wedge k}$ is $(pk-1)$-connected, allowing us to apply the Freudenthal suspension theorem.

%Using the identity $\wTC(X)=\wcat(C_{\Delta}(X)),$ we have that the $k$-th reduced
%diagonal $\bar{\Delta}^ k:C_{\Delta}(X)\to C_{\Delta}(X)^{\wedge k}$ is non-trivial. %Moreover, since $\dim(X)\leq pk-1$, the map $\bar{\Delta}^k$ is stably non-trivial, that %is, $\Sigma \bar{\Delta}^ k\not\simeq *$. Observe that $\dim
%(C_{\Delta}(X))=2 \dim (X)$ and $C_{\Delta}(X)^{\wedge k}$ is
%$(pk-1)$-connected, and thus the Freudenthal Suspension Theorem applies.

%To demonstrate that the $(k+1)$-th reduced diagonal
%$\bar{\Delta}^ {k+1}:C_{\Delta}(X\times S^n)\to C_{\Delta}(X\times
%S^n)^{\wedge k+1}$ is non-trivial, it is sufficient to show that it is
%stably non-trivial. To do so, we can examine the following commutative %diagram:

In order to establish the non-triviality of the $(k+1)$-th reduced diagonal $\bar{\Delta}_{k+1}:C_{\Delta}(X\times S^n)\to C_{\Delta}(X\times S^n)^{\wedge k+1}$, it is enough to demonstrate that its suspension is not homotopically trivial. To accomplish this, we first construct the following commutative diagram: 	
$$\xymatrix{
	C_{\Delta}(X\times S^n) \ar[r]^-{\bar{\Delta}_{k+1}} \ar[d]_{\pi }
	& C_{\Delta}(X\times S^n)^{\wedge k}\wedge C_{\Delta}(X\times S^n)\ar[d]\\
	C_{\Delta}(X)\times C_{\Delta}(S^n) \ar[d] \ar[r]^-{\bar{\Delta}_
			{k+1}} & (C_{\Delta}(X)\times C_{\Delta}(S^n))^{\wedge k}\wedge
		(C_{\Delta}(X)\times C_{\Delta}(S^n))\ar[d]\\
		C_{\Delta}(X)\wedge C_{\Delta}(S^n)\ar[r]^-{\bar{\Delta}_
			{k}\wedge id}& (C_{\Delta}(X))^{\wedge k}\wedge C_{\Delta}(S^n)
	}$$
Next, we apply the suspension functor to the diagram above, resulting in a new diagram where the curved arrows represent homotopy sections. Those are given by Lemma \ref{hsection} and the fact that, for any pair of connected spaces $X,Y$, the suspension of the projection map $X\times Y\rightarrow X\wedge Y$ always has a homotopy section:
	$$\xymatrix{
		\Sigma C_{\Delta}(X\times S^n) \ar[r]^-{\Sigma \bar{\Delta}_{k+1}} \ar[d]_{\Sigma \pi}
		& \Sigma C_{\Delta}(X\times S^n)^{\wedge k}\wedge C_{\Delta}(X\times S^n)\ar[d]\\
		\Sigma(C_{\Delta}(X)\times
		C_{\Delta}(S^n))\ar@/^2.0pc/[u]\ar[d]\ar[r]^-{\Sigma\bar{\Delta}_
			{k+1}} & \Sigma(C_{\Delta}(X)\times C_{\Delta}(S^n))^{\wedge
			k}\wedge
		(C_{\Delta}(X)\times C_{\Delta}(S^n))\ar[d]\\
		\Sigma C_{\Delta}(X)\wedge
		C_{\Delta}(S^n)\ar@/^2.0pc/[u]\ar[r]^-{\Sigma\bar{\Delta}_
			{k}\wedge id}& \Sigma (C_{\Delta}(X))^{\wedge k}\wedge
		C_{\Delta}(S^n) }$$ 
By using a simple diagram chase argument, where the homotopy sections must be considered, we can verify that the top map $\Sigma \bar{\Delta}_{k+1}$ is not homotopy trivial as long as the bottom map, $\Sigma \bar{\Delta}_{k}\wedge id$, is not homotopy trivial.
	
On the other hand, as we have that $\Sigma C_{\Delta}(S^{n})\simeq S^{n+1}\vee S^{2n+1}$ are homotopy equivalent  (see Proposition \ref{hc-sphere} (a)) such a bottom map, which is equivalent to 
$\bar{\Delta}_{k}\wedge \Sigma id:C_{\Delta}(X)\wedge \Sigma C_{\Delta}(S^n)\rightarrow  C_{\Delta}(X)^{\wedge k}\wedge \Sigma C_{\Delta}(S^n)$ 
can be, in turn, naturally identified with $\Sigma^{n+1}\bar{\Delta}_{k}\vee\Sigma^{2n+1}\bar{\Delta}_{k}$. Since $\bar{\Delta}_{k}:C_{\Delta}(X)\rightarrow C_{\Delta}(X)^{\wedge k}$ is stably non trivial, we conclude the result.
\end{proof}

\subsection{Even dimensional case}

To examine and analyze the scenario of even dimension, it is necessary to introduce a new notion for maps beforehand, which has certain similarities with both the conilpotency of a topological space and the weak sectional category of a map. 

 Remember that, for a given map $f:E\rightarrow B$ and any integer $n$, we use the notation $C_{\kappa_n}$ to represent the homotopy cofiber of the $n$-sectional fat wedge of $f$. Additionally, let $\Delta ^f_{n+1}:B\rightarrow C_{\kappa_n}$ denote the composite of the induced map $l_n$ into the cofiber with the diagonal map $\Delta _{n+1}^B:B\rightarrow B^{n+1}$:  
$$
\xymatrix{
{T^n(f)} \ar[r]^{\kappa_n} & {B^{n+1}} \ar[r]^{l_n} & {C_{\kappa_n}} \\
 & {B} \ar[u]^{ \Delta _{n+1}^B } \ar@{.>}[ur]_{\Delta ^f_{n+1}} & } 
$$

We will conveniently restrict ourselves to maps over a finite-dimensional CW-complex. Observe that, if $B$ is a finite dimensional CW-complex and $X$ is any space, then by the Freudenthal suspension theorem we have that
$$[\Sigma ^NB,\Sigma ^NX]\cong [\Sigma ^{N+1}B,\Sigma ^{N+1}X]\cong [\Sigma ^{N+2}B,\Sigma ^{N+2}X]\cong ...$$
\noindent for $N$ sufficiently large. In this case, $[\Sigma ^{\infty }B,\Sigma ^{\infty }X]\cong [\Sigma ^{N}B,\Sigma ^{N}X]$.

\begin{defin}
Let $f:E\rightarrow B$ be a map where $B$ is a finite dimensional CW-complex. The $2$-sigma weak sectional category of a space $f$, denoted as $\sigma \wsecat(f;2)$, is defined as the smallest non-negative integer $n$ for which the stable map
$$2\Sigma ^{\infty}(\Delta ^f_{n+1}):\Sigma ^{\infty}B\rightarrow \Sigma ^{\infty}C_{\kappa_n} $$
\noindent is trivial. If no such integer $k$ exists, then $\sigma \wsecat(f;2)=\infty .$ 
\end{defin}

It is evident from its definition that this homotopy invariant serves as a lower bound for the weak sectional category: $\sigma \mbox{wsecat}(f;2)\leq \mbox{wsecat}(f).$ Similarly to the approach taken with the weak sectional category \cite{Weaksecat}, this invariant can also be described in a more manageable equivalent form. Indeed, if we take the homotopy cofiber of $f$, ${E}\stackrel{f}{\longrightarrow }B\stackrel{j}{\longrightarrow }C_f$, then we can consider both the reduced diagonal $\bar{\Delta}^B_{n+1}:B\rightarrow B^{\wedge (n+1)}$ and the induced map $j^{\wedge (n+1)}:B^{\wedge (n+1)}\rightarrow C_f^{\wedge (n+1)}$. We can denote their composite as follows: $$\bar{\Delta }^f_{n+1}:=j^{\wedge (n+1)}\bar{\Delta}^B_{n+1}:B\rightarrow C_f^{\wedge (n+1)}$$

\begin{prop}
Let $f:E\rightarrow B$ be a map where $B$ is a finite dimensional CW-complex. Then, $\sigma \mbox{wsecat}(f;2)\leq n$ if, and only if, the stable map
$$2\Sigma ^{\infty }(\bar{\Delta }^f_{n+1}):\Sigma ^{\infty }B\rightarrow \Sigma ^{\infty }C_f^{\wedge (n+1)}$$ \noindent is trivial.
\end{prop}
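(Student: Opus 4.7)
The plan is to exhibit a natural map $\psi:C_{\kappa_n}\to C_f^{\wedge(n+1)}$ making $\bar{\Delta}^f_{n+1}$ factor through $\Delta^f_{n+1}$, and then to argue that stably $\psi$ admits a section, so that the two "times-$2$" stable nullity conditions become equivalent.

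To construct $\psi$, observe that $j^{\wedge(n+1)}\circ q_{n+1}=q_{n+1}^{C_f}\circ j^{n+1}:B^{n+1}\to C_f^{\wedge(n+1)}$. Since $j\circ f\simeq \ast$ (as part of the cofibre sequence $E\to B\to C_f$), the iterated fibrewise-join description of $T^n(f)\to B^{n+1}$ provides a deformation of $j^{n+1}|_{T^n(f)}$ into the ordinary fatwedge $T^n(C_f)\subset C_f^{n+1}$: every element of $T^n(f)$ has at least one coordinate coming from $f$, which becomes null-homotopic once $j$ is applied. Composing with the collapsing $q_{n+1}^{C_f}$ yields a nullhomotopy of $(q_{n+1}^{C_f}\circ j^{n+1})\circ\kappa_n$, and hence a factorization $\psi:C_{\kappa_n}\to C_f^{\wedge(n+1)}$ with $\psi\circ l_n=j^{\wedge(n+1)}\circ q_{n+1}$, and in particular $\psi\circ\Delta^f_{n+1}=\bar{\Delta}^f_{n+1}$. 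From this, the implication $\sigma\wsecat(f;2)\leq n \Rightarrow 2\Sigma^{\infty}\bar{\Delta}^f_{n+1}\simeq \ast$ is immediate by post-composition with $\Sigma^{\infty}\psi$.

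For the converse, I would build a stable left inverse of $\Sigma^{\infty}\psi$. Using the stable splitting of the $(n+1)$-fold product, $\Sigma^{\infty} B^{n+1}\simeq\bigvee_{\emptyset\neq S\subseteq\{1,\ldots,n+1\}}\Sigma^{\infty} B^{\wedge|S|}$, and its counterpart for $C_f^{n+1}$, the top $(n+1)$-smash summand $\Sigma^{\infty} B^{\wedge(n+1)}$ passes to $\Sigma^{\infty} C_{\kappa_n}$ and maps identically (via $\Sigma^{\infty} j^{\wedge(n+1)}$) to the corresponding top summand of $\Sigma^{\infty} C_f^{\wedge(n+1)}$. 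Using this common summand one constructs a stable retraction $\sigma:\Sigma^{\infty} C_f^{\wedge(n+1)}\to\Sigma^{\infty} C_{\kappa_n}$ of $\Sigma^{\infty}\psi$. Pre-composition with $\sigma$ then gives $2\Sigma^{\infty}\bar{\Delta}^f_{n+1}\simeq \ast \Rightarrow 2\Sigma^{\infty}\Delta^f_{n+1}\simeq \ast$, completing the equivalence.

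The main obstacle is the rigorous construction of this stable retraction and the verification that it is compatible with the quotients by $T^n(f)$ and $T^n(C_f)$: one must show that after stabilization the cofibre of $\psi$ splits off cleanly, so that $\Sigma^{\infty} C_{\kappa_n}$ decomposes with $\Sigma^{\infty} C_f^{\wedge(n+1)}$ as a summand up to contributions that become inessential once multiplied by $2$. This is the stable "times-$2$" analogue of the unstable comparison between the $C_{\kappa_n}$ and $C_f^{\wedge(n+1)}$ formulations of $\wsecat$ from \cite{Weaksecat}; the role of the factor $2$ is to absorb residual $2$-torsion arising from the Whitehead-bracket-type attaching maps inherent in the iterated fibrewise join, in the spirit of the Hopf invariant computations preceding Proposition \ref{hc-sphere}.
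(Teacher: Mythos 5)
Your forward implication is fine and coincides with what the paper does: the map $\psi:C_{\kappa_n}\to C_f^{\wedge(n+1)}$ with $\psi\circ l_n\simeq q^{C_f}_{n+1}\circ j^{n+1}$ exists exactly as you describe, and post-composing $2\Sigma^{\infty}\Delta^f_{n+1}$ with $\Sigma^{\infty}\psi$ gives one direction. The gap is in the converse, and it stems from missing the one key fact the paper relies on: the square
\[
\xymatrix{T^n(f)\ar[r]\ar[d]_{\kappa_n} & T^n(C_f)\ar[d]\\ B^{n+1}\ar[r]_{j^{n+1}} & C_f^{n+1}}
\]
is a \emph{homotopy pushout} (this is established in \cite{Weaksecat}, where the cofibre of the $n$-sectional fat wedge is identified with $C_f^{\wedge(n+1)}$; it ultimately comes from the compatibility of fibrewise joins with cofibres, $C_{f\ast g}\simeq C_f\wedge C_g$). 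Consequently the induced map $\psi$ on vertical cofibres is an honest, unstable homotopy equivalence, and the equivalence of the two formulations of $\sigma\wsecat(f;2)$ is immediate: one simply transports $2\Sigma^{\infty}\Delta^f_{n+1}$ along $\Sigma^{\infty}\psi$ and its inverse. There is no stable retraction to construct, so the ``main obstacle'' you flag at the end is not an obstacle one needs to overcome --- it is a sign that the route is not the right one.

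Two further points. First, your proposed construction of the stable retraction $\sigma$ does not work as stated: the stable splitting $\Sigma^{\infty}B^{n+1}\simeq\bigvee_S\Sigma^{\infty}B^{\wedge|S|}$ is adapted to the \emph{ordinary} fat wedge $T^n(B)$, not to the sectional fat wedge $T^n(f)$, so there is no reason for the top summand $\Sigma^{\infty}B^{\wedge(n+1)}$ to ``pass to'' $\Sigma^{\infty}C_{\kappa_n}$, nor does $j^{\wedge(n+1)}$ carry it ``identically'' onto the top summand of $\Sigma^{\infty}C_f^{n+1}$ (it is $j^{\wedge(n+1)}$, in general not a stably split map). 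Second, your closing heuristic --- that the factor $2$ is there to ``absorb residual $2$-torsion arising from Whitehead-bracket-type attaching maps'' --- is a red herring for this proposition: the comparison between $C_{\kappa_n}$ and $C_f^{\wedge(n+1)}$ holds integrally and unstably, and the factor $2$ plays no role whatsoever here; it is multiplied onto both sides only because it appears in the definition of $\sigma\wsecat(-;2)$. (Minor slip: the retraction $\sigma$, if you had it, would be applied by post-composition, not pre-composition.)
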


\begin{proof}
By following the analogous steps outlined for the weak sectional category in \cite{Weaksecat}, one simply needs to consider the following homotopy commutative diagram. The top square of the diagram represents a homotopy pushout, and as a result, the induced map $C_{\kappa _n}\rightarrow C_f^{\wedge (n+1)}$ between the corresponding homotopy cofibers becomes a homotopy equivalence:
$$\xymatrix{
 & {T^n(f)} \ar[r] \ar[d]_{\kappa _n} & {T^n(C_f)} \ar[d] \\
 {B} \ar[r]^{\Delta _{n+1}^B} \ar[dr]_{\Delta ^f_{n+1}}  & {B^{n+1}} \ar[r]^{j^{n+1}} \ar[d]_{l_n} & {C_f^{n+1}} \ar[d]^{q^{C_f}_{n+1}} \\ 
 & {C_{\kappa _n}} \ar[r]^{\simeq } & {C_f^{\wedge (n+1)}}
 }$$ 
Applying conveniently the suspension functor we obtain the result. We leave the details to the reader.
\end{proof} 
A special case of $\sigma \mbox{wsecat}(-;2)$ is $\sigma \mbox{wcat}(-;2)$ for finite dimensional CW-complexes.

\begin{defin}
Let $B$ be a finite dimensional CW-complex. Then we define the $2$-sigma weak category of $B$, denoted as  $\sigma \wcat(B;2),$ as the sigma weak sectional category of the map $*\rightarrow B$. In other words, it is the smallest non-negative integer $n$ for which the stable map
$$2\Sigma ^{\infty} \bar{\Delta }^B_{n+1}:\Sigma ^{\infty}B\rightarrow \Sigma ^{\infty} B^{\wedge (n+1)}$$
	\noindent is trivial. If no such integer $n$ exists, then $\sigma \wcat(B;2)=\infty .$ 
  \end{defin}

Recall that the conilpotency of a space $B$, usually denoted as $\mbox{conil}(B)$, is defined as the least integer $n$ (or infinity) such that the suspension of the $(n+1)$-th reduced diagonal map, $\Sigma \bar{\Delta }_{n+1}^B:\Sigma B\rightarrow \Sigma B^{\wedge (n+1)}$, is homotopically trivial. By the very definition of $\sigma \mbox{wcat}(-;2),$ we have a chain of inequalities $$\sigma \mbox{wcat}(B;2)\leq \mbox{conil}(B)\leq \wcat (B)\leq \cat (B)$$

\begin{cor}
If $f:E\rightarrow B$ is a map where $B$ is a finite dimensional CW-complex, then
$$\sigma \wsecat(f;2)\leq \sigma \wcat(B;2)$$
\end{cor}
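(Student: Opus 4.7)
The plan is to deduce the inequality directly from the equivalent characterization given by the preceding proposition, combined with the factorization $\bar\Delta^f_{n+1} = j^{\wedge(n+1)} \bar\Delta^B_{n+1}$ built into the definition of $\bar\Delta^f_{n+1}$. Since the result is trivial if $\sigma\wcat(B;2)=\infty$, one can assume the finite case $n:=\sigma\wcat(B;2)<\infty$ and aim to show $\sigma\wsecat(f;2)\leq n$.

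First, I would invoke the preceding proposition to replace the working definition of $\sigma\wsecat(f;2)$ by the stable-map criterion: $\sigma\wsecat(f;2)\leq n$ if and only if $2\Sigma^\infty \bar\Delta^f_{n+1}:\Sigma^\infty B\to \Sigma^\infty C_f^{\wedge(n+1)}$ is stably null. By construction, this map factors as
\[
\Sigma^\infty B \xrightarrow{\Sigma^\infty \bar\Delta^B_{n+1}} \Sigma^\infty B^{\wedge(n+1)} \xrightarrow{\Sigma^\infty j^{\wedge(n+1)}} \Sigma^\infty C_f^{\wedge(n+1)},
\]
so that $2\Sigma^\infty \bar\Delta^f_{n+1} = (\Sigma^\infty j^{\wedge(n+1)})\circ (2\Sigma^\infty \bar\Delta^B_{n+1})$ in the abelian group of stable maps from $\Sigma^\infty B$ to $\Sigma^\infty C_f^{\wedge(n+1)}$.

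The key step is then a one-line stability argument: since $n=\sigma\wcat(B;2)$, the definition of $\sigma\wcat(-;2)$ ensures $2\Sigma^\infty \bar\Delta^B_{n+1}\simeq *$; post-composing any null stable map with $\Sigma^\infty j^{\wedge(n+1)}$ produces again a null stable map, hence $2\Sigma^\infty \bar\Delta^f_{n+1}\simeq *$. Applying the stable-map criterion in the reverse direction gives $\sigma\wsecat(f;2)\leq n=\sigma\wcat(B;2)$.

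There is essentially no obstacle: the only point worth justifying carefully is that multiplication by $2$ commutes with postcomposition, which is automatic because the relevant hom-sets are abelian groups once we suspend sufficiently many times (and $B$ is finite-dimensional, so Freudenthal guarantees we are in this stable range, exactly as invoked just before the definition of $\sigma\wsecat(-;2)$). I would therefore present the argument in at most a few lines, emphasizing the factorization and the naturality of multiplication by $2$ in stable homotopy.
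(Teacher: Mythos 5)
Your argument is correct and is precisely the one the paper leaves implicit: the corollary is stated without proof because it follows immediately from the factorization $\bar\Delta^f_{n+1}=j^{\wedge(n+1)}\bar\Delta^B_{n+1}$ together with the stable-map criterion of the preceding proposition, exactly as you write. The only point needing care --- that postcomposition with $\Sigma^\infty j^{\wedge(n+1)}$ is a homomorphism on the stable (abelian) hom-groups, so it sends $2\Sigma^\infty\bar\Delta^B_{n+1}\simeq\ast$ to $2\Sigma^\infty\bar\Delta^f_{n+1}\simeq\ast$ --- is correctly identified and justified in your write-up.
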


\medskip
We also have an analogous result to that given for weak category as in Proposition \ref{growth-of-wcat}. Indeed,
\begin{prop}\label{growth-of-sigmawcat}
Let $f:A\to B$ be a map between finite dimensional $(p-1)$-connected CW-complexes ($p\geq 1$). If $f$ is an $r$-equivalence, $\wcat(A)\geq k$ and
$\dim(A)\leq r+p(k-1)-1$, then $\sigma \wcat(B;2)\geq \sigma \wcat(A;2)$.
\end{prop}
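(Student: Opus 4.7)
The strategy mirrors that of Proposition \ref{growth-of-wcat}, but is carried out stably and with the extra factor of $2$ tracked throughout. Write $s=\sigma\wcat(A;2)$; the case $s=0$ is trivial, so we assume $s\geq 1$ and argue by contradiction, supposing $2\Sigma^{\infty}\bar{\Delta}_{s}^{B}\simeq \ast$. By naturality of the reduced diagonal, $\bar{\Delta}_{s}^{B}\circ f = f^{\wedge s}\circ\bar{\Delta}_{s}^{A}$. Passing to $\Sigma^{\infty}$ and using the hypothesis gives
\[
(\Sigma^{\infty}f^{\wedge s})\circ(2\Sigma^{\infty}\bar{\Delta}_{s}^{A}) = 2\Sigma^{\infty}\bar{\Delta}_{s}^{B}\circ\Sigma^{\infty}f = 0
\]
in the stable homotopy category. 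It therefore suffices to show that $(\Sigma^{\infty}f^{\wedge s})_{*}\colon [\Sigma^{\infty}A,\Sigma^{\infty}A^{\wedge s}]\to [\Sigma^{\infty}A,\Sigma^{\infty}B^{\wedge s}]$ is injective, since this will force $2\Sigma^{\infty}\bar{\Delta}_{s}^{A}=0$ and contradict the definition of $s$.

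To establish this injectivity, we estimate the connectivity of $f^{\wedge s}$. Since $f$ is an $r$-equivalence between $(p-1)$-connected CW-complexes, its cofibre $C_{f}$ is $r$-connected. Factoring $f^{\wedge s}$ as the composite of the $s$ maps $1_{B^{\wedge i}}\wedge f\wedge 1_{A^{\wedge(s-1-i)}}$ ($0\leq i\leq s-1$), each successive cofibre $B^{\wedge i}\wedge C_{f}\wedge A^{\wedge(s-1-i)}$ has connectivity $(ip-1)+r+((s-1-i)p-1)+2=r+p(s-1)$. Hence $f^{\wedge s}$ is an $(r+p(s-1))$-equivalence and its cofibre $C_{f^{\wedge s}}$ is $(r+p(s-1))$-connected. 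The associated stable cofibre sequence $\Sigma^{-1}\Sigma^{\infty}C_{f^{\wedge s}}\to\Sigma^{\infty}A^{\wedge s}\to\Sigma^{\infty}B^{\wedge s}$ yields an exact sequence whose leftmost term $[\Sigma^{\infty}A,\Sigma^{-1}\Sigma^{\infty}C_{f^{\wedge s}}]$ vanishes as soon as $\dim A\leq r+p(s-1)-1$. This is precisely the dimension bound of the hypothesis taken with $k=s=\sigma\wcat(A;2)$, for which the companion condition $\wcat(A)\geq k$ is automatic from $\sigma\wcat(A;2)\leq \wcat(A)$. Injectivity then follows.

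The only delicate point is the connectivity estimate for $f^{\wedge s}$, which requires careful bookkeeping of the connectivities of $B^{\wedge i}$, $C_{f}$, and $A^{\wedge(s-1-i)}$ at each stage of the filtration of the iterated smash; once this is in hand, the conclusion is a routine consequence of the stable exact sequence. The remaining verification that the hypothesis's dimension bound captures precisely the threshold needed stably is straightforward, but it is the one place where the stable setting genuinely simplifies the argument compared with Proposition \ref{growth-of-wcat}, since no Whitehead-type unstable comparison is needed.
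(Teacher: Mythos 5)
Your proof is essentially the paper's: both reduce the claim, via naturality of the reduced diagonal, to the injectivity of the map induced by $f^{\wedge s}$ on stable homotopy classes out of $A$, which follows from the $(r+p(s-1))$-connectivity of $f^{\wedge s}$ together with the dimension bound (the paper works with a large finite suspension $\Sigma^{N}$ rather than with spectra, and asserts the connectivity of $f^{\wedge k}$ without the filtration you spell out, but the content is identical). The one point to watch --- shared with the paper, whose proof simply asserts ``since $\sigma\wcat(A;2)\geq k$'' without this being a stated hypothesis --- is that the bound you actually need, $\dim A\leq r+p(s-1)-1$, only follows from the hypothesis $\dim A\leq r+p(k-1)-1$ when $s=\sigma\wcat(A;2)\geq k$; your phrase ``the dimension bound of the hypothesis taken with $k=s$'' glosses over the case $s<k$ in exactly the same way the paper does.
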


\begin{proof}
We take $N$ sufficiently large so that $[\Sigma^{\infty}A,\Sigma^{\infty}A^{\wedge k}]\cong [\Sigma^{N}A,\Sigma^{N}A^{\wedge k}]$ and $[\Sigma^{\infty}B,\Sigma^{\infty}B^{\wedge k}]\cong [\Sigma^{N}B,\Sigma^{N}B^{\wedge k}]$. Since $f^{\wedge k}$ is an $(r+p(k-1))$-equivalence, we deduce that $\Sigma^Nf^{\wedge k}$ is an $(r+p(k-1)+N)$-equivalence. Furthermore, $\dim(\Sigma^NA)=\dim(A)+N$, and thus
$$(\Sigma ^Nf^{\wedge k})_*:[\Sigma ^{N}A,\Sigma ^{N}A^{\wedge k}]\rightarrow [\Sigma ^{N}A,\Sigma ^{N}B^{\wedge k}]$$ \noindent is injective. Now, since $\sigma wcat(A;2)\geq k$, we have $2\Sigma^N(\bar {\Delta}_k^A)\not \simeq *$. Taking into account the injection above and the following commutative diagram
$$\xymatrix{
{\Sigma ^NA} \ar[d]_{2\Sigma ^N(\bar{\Delta }_k^A)} \ar[rr]^{\Sigma ^Nf} & & {\Sigma ^NB} \ar[d]^{2\Sigma ^N(\bar{\Delta }_k^B)} \\
{\Sigma ^NA^{\wedge k}} \ar[rr]_{\Sigma ^N(f^{\wedge k})} & & {\Sigma ^NB} }$$ \noindent we conclude that $2\Sigma^N(\bar {\Delta}_k^B)\not \simeq *$, implying $\sigma \wcat(B;2)\geq k$. 
\end{proof}

Continuing the parallelism with the weak sectional category, we will now explore the relationship with the homotopy cofiber of the map when it admits a homotopy retraction. 

\begin{prop}
Let $f:E\rightarrow B$ be a map between finite dimensional CW-complexes. If $f$ admits a homotopy retraction, then
$$\sigma \wsecat(f;2)=\sigma \wcat(C_f;2)$$
\end{prop}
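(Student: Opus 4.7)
The plan is to mirror the proof of Proposition \ref{properties}(b) in the stable $2$-context, pivoting on the naturality identity
\[
\bar{\Delta}^f_{n+1} \;=\; \bar{\Delta}^{C_f}_{n+1}\circ j \;:\; B\longrightarrow C_f^{\wedge(n+1)},
\]
where $j:B\to C_f$ is the canonical map into the homotopy cofibre. By definition $\bar{\Delta}^f_{n+1}=j^{\wedge(n+1)}\bar{\Delta}^B_{n+1}$, and naturality of the reduced diagonal applied to $j$ gives $\bar{\Delta}^{C_f}_{n+1}\circ j=j^{\wedge(n+1)}\bar{\Delta}^B_{n+1}$, so both sides agree.

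From this identity the inequality $\sigma\wsecat(f;2)\leq \sigma\wcat(C_f;2)$ is immediate and does not even require the retraction hypothesis: if $n=\sigma\wcat(C_f;2)$ then $2\Sigma^{\infty}\bar{\Delta}^{C_f}_{n+1}\simeq \ast$, and precomposing with $\Sigma^{\infty} j$ yields $2\Sigma^{\infty}\bar{\Delta}^f_{n+1}\simeq \ast$, i.e.\ $\sigma\wsecat(f;2)\leq n$.

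For the converse inequality I would invoke the homotopy retraction of $f$. By item (i) recalled before Lemma \ref{hsection}, the existence of a retraction of $f$ forces $\Sigma j:\Sigma B\to \Sigma C_f$ to admit a homotopy section $\sigma$; stabilising produces a stable section $\Sigma^{\infty}\sigma:\Sigma^{\infty}C_f\to \Sigma^{\infty}B$ of $\Sigma^{\infty}j$. Now if $n=\sigma\wsecat(f;2)$, the key identity rewrites
\[
2\Sigma^{\infty}\bar{\Delta}^f_{n+1} \;=\; \bigl(2\Sigma^{\infty}\bar{\Delta}^{C_f}_{n+1}\bigr)\circ \Sigma^{\infty}j\;\simeq\; \ast,
\]
and postcomposing the equality $\Sigma^{\infty} j\circ \Sigma^{\infty}\sigma\simeq \mathrm{id}$ into the left factor gives
\[
2\Sigma^{\infty}\bar{\Delta}^{C_f}_{n+1}\;\simeq\; \bigl(2\Sigma^{\infty}\bar{\Delta}^{C_f}_{n+1}\bigr)\circ \Sigma^{\infty}j\circ \Sigma^{\infty}\sigma \;=\; \bigl(2\Sigma^{\infty}\bar{\Delta}^f_{n+1}\bigr)\circ \Sigma^{\infty}\sigma\;\simeq\; \ast,
\]
so $\sigma\wcat(C_f;2)\leq n$, as required.

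The only point requiring care -- and the closest thing to an obstacle -- is the compatibility of the factor $2$ with composition: since stable homotopy classes form abelian groups with bilinear composition, $2g\circ h=2(g\circ h)=g\circ 2h$, which is what legitimises moving the $2$ across the composition with $\Sigma^{\infty}j$ and $\Sigma^{\infty}\sigma$ in both directions. Once this is noted, the argument is entirely formal and parallels the proof of Proposition \ref{properties}(b).
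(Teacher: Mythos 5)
Your proof is correct. The easy inequality $\sigma\wsecat(f;2)\leq\sigma\wcat(C_f;2)$ is handled exactly as in the paper (and you rightly note the retraction is not needed there). For the converse you take a genuinely different, and in fact slightly more direct, route. The paper argues as follows: from $2\Sigma^N(\bar{\Delta}^{C_f}_{n+1})\circ\Sigma^N j\simeq\ast$ and the coexactness of the Puppe sequence $\Sigma^N B\to\Sigma^N C_f\stackrel{\Sigma^N\delta}{\to}\Sigma^{N+1}E$, it produces a factorization $2\Sigma^N(\bar{\Delta}^{C_f}_{n+1})\simeq\phi\circ\Sigma^N\delta$, and then kills $\delta$ via $\delta\simeq(\Sigma r)(\Sigma f)\delta\simeq\ast$, where $r$ is the retraction. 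You instead invoke fact (i) (stated before Lemma \ref{hsection}) to get a homotopy section of $\Sigma j$, stabilize it, and cancel $\Sigma^{\infty}j$ on the right; this avoids constructing the factorization $\phi$ altogether. Both arguments exploit the same phenomenon -- the retraction splits the cofibre sequence after one suspension -- but yours trades the connecting-map computation for the splitting of $\Sigma j$, which makes the converse a one-line cancellation. Your remark about bilinearity of composition in the stable category, which legitimises moving the factor $2$ across $\Sigma^{\infty}j$ and $\Sigma^{\infty}\sigma$, is exactly the point that needs to be made and is correctly made.
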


\begin{proof}

If $\sigma \mbox{wcat}(C_f;2)\leq n$, then $2\Sigma ^{\infty} \bar{\Delta }_{n+1}^{C_f}$ is trivial. Consequently, $2\Sigma ^{\infty} (\bar{\Delta }_{n+1}^{C_f}j)$ is also trivial. Considering the commutativity $\bar{\Delta }_{n+1}^{C_f}j=j^{\wedge (n+1)}\bar{\Delta}_{n+1}^B=\bar{\Delta }_{n+1}^f$ we can easily conclude that $\sigma \mbox{wsecat}(f;2)\leq n.$ 
 
Now suppose that $\sigma \mbox{wsecat}(f;2)\leq n$. We choose $N$ sufficiently large so that 
$$[\Sigma ^{\infty }B,\Sigma ^{\infty }C_f^{\wedge (n+1)}]\cong [\Sigma ^{N}B,\Sigma ^{N}C_f^{\wedge (n+1)}]$$ \noindent and 
$[\Sigma ^{\infty }C_f,\Sigma ^{\infty }C_f^{\wedge (n+1)}]\cong [\Sigma ^{N}C_f,\Sigma ^{N}C_f^{\wedge (n+1)}]$. Taking into account the commutative diagram
$$\xymatrix{
{\Sigma ^NB} \ar[d]_{\Sigma ^Nj} \ar[rr]^{2\Sigma ^N(\bar{\Delta }_{n+1}^B)} & & {\Sigma ^NB^{\wedge (n+1)}} \ar[d]^{\Sigma ^Nj^{\wedge (n+1)}} \\
{\Sigma ^NC_f} \ar[rr]_{2\Sigma ^N(\bar{\Delta }_{n+1}^{C_f})} & & {\Sigma ^N(C_f^{\wedge (n+1)})}
}$$ \noindent we have that the following composite is homotopically trivial
$$(2\Sigma ^N(\bar{\Delta }_{n+1}^{C_f}))(\Sigma ^Nj)=(\Sigma ^Nj^{\wedge (n+1)})(2\Sigma ^N(\bar{\Delta }_{n+1}^B))=2\Sigma ^N(\bar{\Delta }_{n+1}^f)\simeq *$$
Therefore, if we denote as $\delta :C_f\rightarrow \Sigma E$ the induced map in the homotopy cofiber of $j:B\rightarrow C_f,$ then there is a map $\phi$ making commutative the following diagram
$$\xymatrix{
{\Sigma ^NB} \ar[r]^{\Sigma ^Nj} & {\Sigma ^NC_f} \ar[r]^{\Sigma ^N \delta } \ar[dr]_{2\Sigma ^N(\bar{\Delta }_{n+1}^{C_f})} & {\Sigma ^{N+1}E} \ar@{.>}[d]^{\phi } \\
 & & {\Sigma ^NC_f^{\wedge (n+1)}}  }$$
However, from the Barrat-Puppe sequence
$$E\stackrel{f}{\longrightarrow}B\stackrel{j}{\longrightarrow }C_f\stackrel{\delta }{\longrightarrow }{\Sigma E}\stackrel{\Sigma f}{\longrightarrow }{\Sigma B}{\longrightarrow }\cdots$$ \noindent we deduce that $\delta \simeq (\Sigma r)(\Sigma f)\delta \simeq *,$ where $r$ stands for the homotopy retraction of $f.$ We conclude that $2\Sigma ^N(\bar{\Delta }^{n+1}_{C_f})\simeq *$, that is, $\sigma \mbox{wcat}(C_f;2)\leq n.$
\end{proof}

A particular case of $\sigma \wsecat(-;2)$ that is specially relevant in our work is as follows:

\begin{defin}
The $2$-sigma weak topological complexity of a finite dimensional CW-complex $X$ is defined as the $2$-sigma weak sectional category of the diagonal map $\Delta :X\rightarrow X\times X:$
$$\sigma \wTC(X;2):=\sigma \wsecat(\Delta ;2)$$
\end{defin}

As a direct consequence of this definition and all the previously established results, we obtain the following corollary:

\begin{cor}
Let $X$ be a finite dimensional CW-complex. Then 
$$\sigma \wTC(X;2)=\sigma \wcat(C_{\Delta }(X);2)\leq \wcat(C_{\Delta }(X))=\wTC(X)$$
\end{cor}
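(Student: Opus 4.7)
The plan is to observe that this corollary follows by concatenating three already-established facts, applied to the diagonal $\Delta:X\to X\times X$, whose homotopy cofibre is $C_\Delta(X)$.

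First, I would note that $\Delta:X\to X\times X$ admits a homotopy retraction, namely either projection $p_i:X\times X\to X$, since $p_i\Delta=\mathrm{id}_X$. This is the key structural property that unlocks the first and last equalities in the chain.

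Next, I would apply the preceding proposition ($\sigma\wsecat(f;2)=\sigma\wcat(C_f;2)$ whenever $f$ admits a homotopy retraction) to $f=\Delta$. By definition $\sigma\wTC(X;2)=\sigma\wsecat(\Delta;2)$, so this yields the first equality $\sigma\wTC(X;2)=\sigma\wcat(C_\Delta(X);2)$. For the middle inequality, I would invoke the chain
\[
\sigma\wcat(B;2)\leq \mathrm{conil}(B)\leq \wcat(B)
\]
already noted just after the definition of $\sigma\wcat(-;2)$, specialised to $B=C_\Delta(X)$ (which is a finite dimensional CW-complex since $X$ is). For the final equality, I would recall from Proposition \ref{properties}(b), applied again to the diagonal map with its homotopy retraction, that $\wTC(X)=\wsecat(\Delta)=\wcat(C_\Delta(X))$, an identification that was emphasised in the preliminaries.

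There is no serious obstacle here: the statement is a formal concatenation of results already proved in the preceding subsections, and the only point worth checking carefully is that $\Delta$ fits the hypothesis of each of the invoked results, which is immediate from the existence of the projection retraction and the finite dimensionality of $X$ (and hence of $X\times X$ and $C_\Delta(X)$).
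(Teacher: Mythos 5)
Your proposal is correct and matches the paper's intent exactly: the corollary is stated there as ``a direct consequence of this definition and all the previously established results,'' and the three facts you concatenate (the retraction property of $\Delta$ feeding into $\sigma\wsecat(\Delta;2)=\sigma\wcat(C_\Delta(X);2)$, the chain $\sigma\wcat(B;2)\leq\mathrm{conil}(B)\leq\wcat(B)$, and $\wTC(X)=\wcat(C_\Delta(X))$ from Proposition \ref{properties}(b)) are precisely the ones the authors rely on. The hypothesis checks you mention (finite dimensionality of $X\times X$ and $C_\Delta(X)$, and the projection as retraction) are the only things to verify, and you verify them.
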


And now we are ready to state and prove the main result of this subsection, considering the even-dimensional case.

\begin{thm}\label{result2} Suppose $X$ is a finite dimensional $(p-1)$-connected CW-complex satisfying $\sigma \wTC(X;2)=k\geq 2$. If $\dim(X)\leq pk-1,$ then we have
$$\wTC(X\times S^n)\geq \sigma \wTC(X;2)+2$$ \noindent for all $n$ even.  If, in addition,
	$\sigma \wTC(X;2)=\TC(X)$, then $\TC(X\times S^n)=\TC(X)+\TC(S^n)$, for all $n$.
\end{thm}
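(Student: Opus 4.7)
The plan is to adapt the strategy of Theorem \ref{result} to the even-dimensional case, where the reduced diagonal of $C_{\Delta}(S^n)$ is controlled not by the zero map but by a stable degree-$\pm 2$ map. I would first establish the inequality $\wTC(X\times S^n)\geq k+2$ for $n$ even, and then combine this with the preceding odd case and the standard bound $\TC(X\times S^n)\leq \TC(X)+\TC(S^n)$ to deduce the equality in the second assertion.

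Writing $A=C_{\Delta}(X)$ and $B=C_{\Delta}(S^n)$, I would set up a three-row commutative diagram whose top row is $\bar{\Delta}_{k+2}^{C_{\Delta}(X\times S^n)}$, middle row is $\bar{\Delta}_{k+2}^{A\times B}$, and bottom row is $\bar{\Delta}_k^A\wedge \bar{\Delta}_2^B:A\wedge B\to A^{\wedge k}\wedge B^{\wedge 2}$. The left verticals are $\pi$ (the natural map into $A\times B$) and the quotient $q:A\times B\to A\wedge B$; the right verticals are $\pi^{\wedge (k+2)}$ and the projection $(A\times B)^{\wedge(k+2)}\to A^{\wedge k}\wedge B^{\wedge 2}$ obtained by applying $p_A$ in $k$ of the smash factors and $p_B$ in the remaining two. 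The elementary but crucial point is that this projection post-composed with $\bar{\Delta}_{k+2}^{A\times B}$ equals $(\bar{\Delta}_k^A\wedge \bar{\Delta}_2^B)\circ q$.

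Applying the suspension functor, the composite $\Sigma(q\circ \pi)$ admits a homotopy section built from Lemma \ref{hsection} together with the standard splitting $\Sigma(A\times B)\simeq \Sigma A\vee \Sigma B\vee \Sigma(A\wedge B)$. A diagram chase parallel to the one in the proof of Theorem \ref{result} then reduces the problem to showing $\Sigma(\bar{\Delta}_k^A\wedge \bar{\Delta}_2^B)$ is not null-homotopic. Using $\Sigma(A\wedge B)\simeq A\wedge \Sigma B$ and the splittings $\Sigma B\simeq S^{n+1}\vee S^{2n+1}$ and $\Sigma(B\wedge B)\simeq S^{2n+1}\vee S^{3n+1}\vee S^{3n+2}\vee S^{4n+1}$ recalled in the excerpt, and the fact that $\Sigma\bar{\Delta}_2^B$ exhibits a degree-$\pm 2$ component $S^{2n+1}\to S^{2n+1}$ for $n$ even, the restriction/projection of $\Sigma(\bar{\Delta}_k^A\wedge \bar{\Delta}_2^B)$ to the $\Sigma^{2n+1}$-summands is identified with $\Sigma^{2n+1}(\pm 2\bar{\Delta}_k^A)$. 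The hypothesis $\sigma\wTC(X;2)\geq k$ is precisely $2\Sigma^{\infty}\bar{\Delta}_k^A\not\simeq *$, while $\dim(A)\leq 2(pk-1)$ paired with the $(pk-1)$-connectivity of $A^{\wedge k}$ lets Freudenthal convert stable non-triviality into non-triviality at suspension level $2n+1$; hence this component is non-zero.

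For the final equality, the first part gives $\wTC(X\times S^n)\geq k+2=\TC(X)+\TC(S^n)$ for $n$ even, which, combined with $\wTC\leq \TC$ and $\TC(X\times S^n)\leq \TC(X)+\TC(S^n)$, forces equality. For $n$ odd, the inequalities $\sigma\wTC(X;2)\leq \wTC(X)\leq \TC(X)$ together with $\sigma\wTC(X;2)=\TC(X)$ force $\wTC(X)=\TC(X)$, so Theorem \ref{result} applies directly. The main obstacle will be the combinatorial bookkeeping in the diagram chase, in particular verifying that the chosen projection on $(A\times B)^{\wedge(k+2)}$ extracts precisely $(\bar{\Delta}_k^A\wedge \bar{\Delta}_2^B)\circ q$ and identifying correctly which stable summand of $\Sigma(A\wedge B)\to \Sigma(A^{\wedge k}\wedge B^{\wedge 2})$ carries the non-trivial $\pm 2\bar{\Delta}_k^A$ contribution.
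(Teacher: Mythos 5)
Your proposal is correct and follows essentially the same route as the paper's proof: the same three-row diagram ending in $\bar{\Delta}_k\wedge\bar{\Delta}$, the same suspension/section diagram chase reducing to the non-triviality of $\pm 2\Sigma^{2n+1}\bar{\Delta}_k$, the same Freudenthal destabilization from the hypothesis $\sigma\wTC(X;2)=k$, and the same deduction of the odd case from Theorem \ref{result} via $\sigma\wTC(X;2)\leq\wTC(X)\leq\TC(X)$.
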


\begin{proof}
Since $\sigma \mbox{wTC}(X;2)=\sigma \mbox{wcat}(C_{\Delta }(X);2)$, we can deduce that the map $$2\Sigma ^{\infty }(\bar{\Delta }_k):\Sigma ^{\infty }C_{\Delta }(X)\rightarrow \Sigma ^{\infty }C_{\Delta }(X)^{\wedge k}$$
\noindent is nontrivial. Additionally, applying the Freudenthal suspension theorem, we obtain that, $2\Sigma ^m(\bar{\Delta }_k):\Sigma ^{m}C_{\Delta }(X)\rightarrow \Sigma ^mC_{\Delta }(X)^{\wedge k}$ is not homotopy trivial for all $m$, considering that $\dim (X)\leq pk-1$. 

To establish the inequality $\wTC(X\times S^n)\geq 2$, we can follow the same steps as in Theorem \ref{result} for the case of odd dimensions. In this sense it suffices to demonstrate that the suspension of the map $\bar{\Delta }_{k+2}:C_{\Delta }(X\times S^n)\rightarrow C_{\Delta }(X\times S^n)^{\wedge (k+2)}$ is not homotopically trivial. To this aim we first construct a similar diagram
$$\xymatrix{
C_{\Delta}(X\times S^n) \ar[r]^-{\bar{\Delta}_{k+2}} \ar[d]_{\pi }
 & C_{\Delta}(X\times S^n)^{\wedge k}\wedge C_{\Delta}(X\times S^n)^{\wedge 2} \ar[d]\\
C_{\Delta}(X)\times C_{\Delta}(S^n) \ar[d] \ar[r]^-{\bar{\Delta}_
{k+2}} & (C_{\Delta}(X)\times C_{\Delta}(S^n))^{\wedge k}\wedge
(C_{\Delta}(X)\times C_{\Delta}(S^n))^{\wedge 2}\ar[d]\\
C_{\Delta}(X)\wedge C_{\Delta}(S^n)\ar[r]_-{\bar{\Delta}_
{k}\wedge \bar{\Delta}}& (C_{\Delta}(X))^{\wedge k}\wedge C_{\Delta}(S^n)^{\wedge 2}
}$$
By applying the suspension functor to this diagram, we obtain
$$\xymatrix{
\Sigma C_{\Delta}(X\times S^n) \ar[r]^-{\Sigma \bar{\Delta}_{k+2}} \ar[d]_{\Sigma \pi}
& \Sigma C_{\Delta}(X\times S^n)^{\wedge k}\wedge C_{\Delta}(X\times S^n)^{\wedge 2}\ar[d]\\
\Sigma(C_{\Delta}(X)\times C_{\Delta}(S^n))\ar@/^2.0pc/[u]\ar[d]\ar[r]^-{\Sigma\bar{\Delta}_{k+2}} & \Sigma(C_{\Delta}(X)\times C_{\Delta}(S^n))^{\wedge k}\wedge (C_{\Delta}(X)\times C_{\Delta}(S^n))^{\wedge 2}\ar[d]\\
\Sigma C_{\Delta}(X)\wedge C_{\Delta}(S^n)\ar@/^2.0pc/[u]\ar[r]^-{\Sigma\bar{\Delta}_
{k}\wedge \bar{\Delta}}& \Sigma (C_{\Delta}(X))^{\wedge k}\wedge
C_{\Delta}(S^n)^{\wedge 2} }$$
\noindent where the curved arrows are homotopy sections. For a completely analogous argument to the one made in the proof of Theorem \ref{result}, ultimately we must verify that the bottom map
$\Sigma \bar{\Delta }_k\wedge \bar{\Delta }:\Sigma C_{\Delta}(X)\wedge C_{\Delta}(S^n)\rightarrow \Sigma (C_{\Delta}(X))^{\wedge k}\wedge C_{\Delta}(S^n)^{\wedge 2} $, is non-trivial. This map can be considered, up to homeomorphism, as
$$\bar{\Delta }_k\wedge \Sigma \bar{\Delta }:C_{\Delta}(X)\wedge \Sigma C_{\Delta}(S^n)\rightarrow  (C_{\Delta}(X))^{\wedge k}\wedge \Sigma C_{\Delta}(S^n)^{\wedge 2}$$ Moreover, it produces, according to the argument at the end of Section 2, the following homotopy commutative diagram, where the curved arrow on the left represents a homotopy section, and the curved arrow on the right represents a homotopy retraction:
$$\xymatrix{
	{C_{\Delta}(X)\wedge (S^{2n+1}\vee S^{n+1}}) \ar[d] \ar[rr]^(.4){\bar{\Delta }_k\wedge \Sigma \bar{\Delta }} & & {C_{\Delta}(X)^{\wedge k}\wedge (S^{2n+1}\vee S^{3n+1}\vee S^{3n+2}\vee S^{4n+1})} \ar@/^2.0pc/[d] \\
	{C_{\Delta}(X)\wedge S^{2n+1}} \ar[rr]_{\bar{\Delta }_k\wedge \pm 2Id } \ar@/^2.0pc/[u] & & {C_{\Delta}(X)^{\wedge k} S^{2n+1}} \ar[u] }$$
Once again, the top arrow in the diagram is non-trivial in homotopy as long as the bottom arrow remains non-trivial in homotopy. However, the bottom arrow can be equivalently expressed as $\pm 2\Sigma ^{2n+1}(\bar{\Delta }_k):\Sigma ^{2n+1}C_{\Delta }(X)\rightarrow \Sigma ^{2n+1}C_{\Delta}(X)^{\wedge k},$ which, according to our assumptions, is not homotopy trivial.
The remainder of the proof is straightforward. Note that the equality
$\sigma \wTC(X;2)=\TC(X)$ implies $\wTC(X)=\TC(X)$. Therefore the equality $\TC(X\times S^n)=\TC(X)+\TC(S^n)$ for $n$ odd follows from Theorem \ref{result}. 
\end{proof}

\section{Examples: two-cell complexes}

We will now apply our results in order to exhibit a new class of spaces for which the equality
$$\mbox{TC}(X\times S^{n})= \mbox{TC}(X)+ \mbox{TC}(S^n)$$
holds for any $n$.

We first note that, since the zero-divisor cuplength over a field ${\Bbbk}$ of $X\times S^ n$ is at least ${\rm zcl}_{\Bbbk}(X)+1$, any space $X$ satisfying ${\rm zcl}_{\Bbbk}(X)=\TC(X)$ satisfies $\TC(X\times S^ n)\geq \TC(X)+1$ for any $n$ and  $\TC(X\times S^ n)=\TC(X)+1$ when $n$ is odd. As is well-known, there are a lot of spaces for which the equality ${\rm zcl}_{\Bbbk}(X)=\TC(X)$ holds, for instance the orientable surfaces, the $1$-connected symplectic manifolds... When $n$ is even and $2$ is invertible in the field ${\Bbbk}$, we have ${\rm zcl}_{\Bbbk}(X\times S^n)={\rm zcl}_{\Bbbk}(X)+2$. Consequently, the same reasoning shows that, any space $X$ satisfying ${\rm zcl}_{\Bbbk}(X)=\TC(X)$ where $2$ is invertible in the field ${\Bbbk}$, satisfies $\TC(X\times S^ n)= \TC(X)+\TC(S^n)$ for any $n$. 

We are therefore interested in spaces for which ${\rm zcl}_{\Bbbk}(X)<\wTC(X)=\TC(X)$. As mentioned before, the space $X=S^3\cup _{\alpha }e^7$, where $\alpha\in \pi_6(S^3)$ is the Blakers-Massey element, satisfies
${\rm zcl}_{\Bbbk}(X)=2$ and $\wTC(X)=\TC(X)=3.$
As $X$ is $2$-connected and $\mbox{dim}(X)=7\leq 8$, Theorem \ref{result} guarantees that
$$\mbox{TC}(X\times S^{n})\geq \mbox{TC}(X)+1$$ \noindent and that the
equality holds for $n$ odd. This space is a two-cell complex $S^p\cup e^{q+1}$ in the metastable range, which means that $2p-1 < q < 3p-2$.

%By using results from \cite{GonzalezGrantV}, we will now see a more general result on two-cell complexes $X=S^p\cup_{\alpha}e^ {q+1}$ in the metastable range. In \cite{GonzalezGrantV}, a study of the invariants $\TC(X)$ and $\cat(C_{\Delta}(X))$ of such spaces has been carried out by means of the Berstein-Hilton Hopf invariant $H(\alpha)$ of the map $\alpha:S^q\to S^p$
%\footnote{Actually the Hopf invariants used in \cite{GonzalezGrantV} is based on Iwase's approach to Berstein-Hilton-Hopf invariants and are defined using the characterization of the LS-category in terms of Ganea fibrations. The information contained in these invariants is nevertheless equivalent to the information contained in the invariants considered here. We also note that in our conditions all the relevant Hopf sets considered in  \cite{GonzalezGrantV} are singletons.} Recall that $H(\alpha)$ is a homotopy class in $\pi_q(\Omega S^p\ast \Omega S^p)$ satisfying (when $q\geq p$)
%$$\cat(S^p\cup_{\alpha}e^ {q+1})=1 \Leftrightarrow H(\alpha)=0$$
%When $q< 3p-2$, $H(\alpha)$ is stable and completely determined by its projection $H_0(\alpha)$ on the bottom sphere $S^{2p-1}$ of $\Omega S^p\ast \Omega S^p$. We establish:

By using results from \cite{GonzalezGrantV} and \cite{GGV2}, we will establish a more general result on two-cell complexes $X=S^p\cup_{\alpha}e^ {q+1}$ in the metastable range. The analysis of $\cat(C_{\Delta}(X))$ for such spaces by means of Hopf invariants\footnote{Actually the $\cat$-Hopf invariants used in \cite{GonzalezGrantV} and \cite{GGV2} are based on Iwase's approach and defined using the characterization of the LS-category in terms of Ganea fibrations. The information contained in these invariants is nevertheless equivalent to the information contained in the Berstein-Hilton invariants considered here.} realized in \cite{GGV2} will be especially relevant for us. Let us recall from Remark \ref{Hil-inv} that, for $\alpha:S^q\to S^p$ in the metastable range, $H(\alpha)$ is a stable homotopy class in $\pi_q(\Omega S^p\ast \Omega S^p)$ which is completely determined by its projection $H_0(\alpha)$ on the bottom sphere $S^{2p-1}$ of $\Omega S^p\ast \Omega S^p\simeq S^{2p-1}\vee  S^{3p-2} \vee  S^{3p-2} \vee \cdots $.

From \cite{GGV2} we have a cone decomposition of $C_{\Delta}(X)$
\begin{equation}\label{conedecomposition}
\ast=D_0\subset D_1\subset D_2\subset D_3\subset D_4=C_{\Delta}(X)
\end{equation}
with attaching maps of the form $S_i \stackrel{\beta_i}{\longrightarrow}D_{i}$
where
$$S_0=S^{p-1}, \qquad S_1=S^{2p-1}\vee S^q, \qquad S_2=S^{p+q}\vee S^{p+q} \quad \mbox{and}\quad S_3=S^{2q+1}.$$
In particular $D_1=S^p$ and $D_2=S^p\cup e^{2p}\cup e^{q+1}$. For each $i$, $\cat(D_i)\leq i$ and it is shown in \cite{GGV2} that the relevant Hopf invariant for the study of $\cat(D_i)$, here denoted by $H(\beta_i)$, can be expressed in terms of $H_0(\alpha)$.

We will here use this information to estimate $\wTC(X)=\wcat(C_{\Delta}(X))$ and $\sigma\wTC(X;2)=\sigma\wcat(C_{\Delta}(X;2))$.
We first establish:

\begin{thm}\label{twocell}
Let $X=S^p\cup_{\alpha}e^ {q+1}$ be a two-cell complex such that $2p-1<q<3p-2$ and $(2+(-1)^ p)H_0(\alpha)\neq 0$. Then
\begin{enumerate}
	\item[(a)] $\wTC(X)\geq 3$
	\item[(b)] $\wTC(X)=\TC(X)$
\end{enumerate}
Consequently, for any odd $n$, $\TC(X\times S^n)= \TC(X)+1$.
\end{thm}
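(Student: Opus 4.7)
The plan is to analyze $\wTC(X) = \wcat(C_{\Delta}(X))$ through the cone decomposition \eqref{conedecomposition}, using the characterization of weak category by crude Hopf invariants (Lemma \ref{crudeHsquare}) and the growth lemma (Proposition \ref{growth-of-wcat}). Once parts (a) and (b) are established, the conclusion $\TC(X \times S^n) = \TC(X) + 1$ for odd $n$ is immediate from Theorem \ref{result} applied with $k = \wTC(X) = 3$: the dimension hypothesis $\dim(X) = q+1 \leq pk - 1 = 3p-1$ is automatic in the metastable range since $q < 3p-2$.

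For part (a), I would first establish $\wcat(D_3) \geq 3$. Since $D_3 = D_2 \cup_{\beta_2} CS_2$ with $S_2 = S^{p+q} \vee S^{p+q}$ a wedge of spheres of common dimension $p+q \geq \dim(D_2)$, $\cat(D_2) \leq 2$, $D_2$ is $(p-1)$-connected, and $\dim(D_2) \leq q+1 \leq 3p-2$ in the metastable range, the hypotheses of Lemma \ref{crudeHsquare} are satisfied with $k=2$, and the problem reduces to showing $\bar H(\beta_2) \neq 0$. The crucial computation identifies $\bar H(\beta_2)$ in terms of $H_0(\alpha)$: translating the Iwase-style $\cat$-Hopf invariant computations of \cite{GGV2} into the Berstein-Hilton crude Hopf invariant (via the equivalence noted in the footnote and in Remark \ref{Hil-inv}), one obtains, up to sign and an appropriate suspension, $\bar H(\beta_2) = \pm (2+(-1)^p)\Sigma H_0(\alpha)$. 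The coefficient $2$ reflects the Hopf invariant of the Whitehead bracket $[\iota_p, \iota_p]$ governing the $C_{\Delta}(S^p) \hookrightarrow C_{\Delta}(X)$ part (cf.\ the discussion preceding Proposition \ref{hc-sphere}), while the $(-1)^p$ contribution comes from the graded-commutativity of the cross-cell involving $\alpha$. The hypothesis $(2+(-1)^p)H_0(\alpha) \neq 0$ then forces $\bar H(\beta_2) \neq 0$, and hence $\wcat(D_3) \geq 3$. To transfer this bound to $\wcat(D_4) = \wTC(X)$, apply Proposition \ref{growth-of-wcat} to the inclusion $D_3 \hookrightarrow D_4$: this inclusion is a $(2q+1)$-equivalence since $D_4$ is obtained from $D_3$ by attaching a single cell of dimension $2q+2$, and the required bound $\dim(D_3) = p+q+1 \leq (2q+1) + 2p - 1$ is trivially satisfied, yielding $\wTC(X) \geq \wcat(D_3) \geq 3$.

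For part (b), since $\wTC(X) \leq \TC(X)$ always, part (a) gives $\TC(X) \geq 3$; I would obtain the reverse inequality $\TC(X) \leq 3$ from the results of \cite{GonzalezGrantV} and \cite{GGV2}, where it is shown that for two-cell complexes in the metastable range one has $\TC(X) = \cat(C_{\Delta}(X))$, together with the cone-decomposition analysis establishing that the $\cat$-Hopf invariant of the final attaching map $\beta_3 : S^{2q+1} \to D_3$ vanishes, so that $\cat(D_4) \leq 3$. Combining these yields $\TC(X) \leq 3$, hence $\wTC(X) = \TC(X) = 3$.

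The main technical obstacle is the explicit identification $\bar H(\beta_2) = \pm (2+(-1)^p)\Sigma H_0(\alpha)$, which requires a careful reading of the cone-decomposition computations of \cite{GGV2} and of the dictionary between the Iwase-style and Berstein-Hilton Hopf invariants. The remaining ingredients are routine applications of the preliminary results developed in Section \ref{preliminar}.
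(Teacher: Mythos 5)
Your part (a) and the final deduction from Theorem \ref{result} follow the paper's argument closely and are correct: the paper likewise reduces to $\bar H(\beta_2)\neq 0$ via Lemma \ref{crudeHsquare} applied to $S_2\stackrel{\beta_2}{\longrightarrow}D_2\hookrightarrow D_3$, identifies $\bar H(\beta_2)$ on each sphere $S^{p+q+1}$ with $\Sigma^{p+1}\bigl((2+(-1)^p)H_0(\alpha)\bigr)$ using \cite[Corollary 4.8]{GGV2} and \cite[Theorem 5.6]{GonzalezGrantV}, and then transfers $\wcat(D_3)\geq 3$ to $\wcat(C_{\Delta}(X))$ by Proposition \ref{growth-of-wcat}. (One small point: the paper first establishes $\wcat(D_2)=\cat(D_2)=2$ via the $(2p-1)$-equivalence $X\to D_2$ and Proposition \ref{growth-of-wcat}; you use $\cat(D_2)\leq 2$ implicitly when invoking Lemma \ref{crudeHsquare} with $k=2$, and your verification of the dimensional hypotheses is otherwise fine.)

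Part (b), however, contains a genuine error. You claim that the $\cat$-Hopf invariant of the final attaching map $\beta_3:S^{2q+1}\to D_3$ vanishes, so that $\cat(D_4)\leq 3$ and hence $\TC(X)\leq 3$. This is false under the stated hypotheses: by \cite{GGV2}, $H(\beta_3)$ is the composite of $2(2+(-1)^p)H_0(\alpha)\ast H_0(\alpha)$ with the inclusion of the bottom sphere of $F_3(D_3)$, and by \cite[Corollary 4.10]{GGV2} one has $\TC(X)=4$ exactly when this class is nonzero --- which the hypothesis $(2+(-1)^p)H_0(\alpha)\neq 0$ does not rule out. The theorem asserts $\wTC(X)=\TC(X)$, not $\wTC(X)=\TC(X)=3$. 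The paper therefore splits into two cases: if $\TC(X)=3$ the conclusion is immediate from (a); if $\TC(X)=4$ one must show $\wcat(D_4)\geq 4$, i.e.\ that the \emph{crude} invariant $\bar H(\beta_3)$ is nonzero. This is the delicate part of the proof: since $2q+1$ is not small relative to the dimensions of the higher cells of $F_3(D_3)$, one cannot argue that $H(\beta_3)$ only touches the bottom sphere, and one needs instead that the bottom sphere $S^{4p}$ of $D_3^{\wedge 4}$ stably splits off as a wedge summand in the relevant range --- which the paper obtains by comparing with $C_{\Delta}(S^p)^{\wedge 4}$ via Proposition \ref{hc-sphere}, the $q$-equivalence $C_{\Delta}(S^p)\to C_{\Delta}(X)$, Remark \ref{rem-inv} and \cite[Lemma 4.9]{GGV2} --- so that $\bar H(\beta_3)$ is again completely determined by the stable class $2(2+(-1)^p)H_0(\alpha)\ast H_0(\alpha)$, which is nonzero when $\TC(X)=4$. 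Your argument as written only covers the case $2(2+(-1)^p)H_0(\alpha)\ast H_0(\alpha)=0$ and omits this entire step.
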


\begin{proof}
%Recall that $\wTC(X)=\wcat(C_{\Delta}(X))$. From \cite{GGV2} we have a cone decomposition
% $$\ast=D_0\subset D_1\subset D_2\subset D_3\subset D_4=C_{\Delta}(X)$$ with attaching maps of the form $S_i \stackrel{\beta_i}{\longrightarrow}D_{i}$
%where
%$$S_0=S^{p-1}, \qquad S_1=S^{2p-1}\vee S^q, \qquad S_2=S^{p+q}\vee S^{p+q} \quad \mbox{and}\quad S_3=S^{2q+1}.$$
%In particular $D_1=S^p$ and $D_2=S^p\cup e^{2p}\cup e^{q+1}$. For each $i$, $\cat(D_i)\leq i$ and a study of $\cat(D_i)$ in terms of $H_0(\alpha)$ has been carried out in \cite{GGV2}. We will here use some information from this study to estimate $\wcat(C_{\Delta}(X))$.
%% and $\sigma\wcat(C_{\Delta}(X);2)$.

Considering the cone decomposition (\ref{conedecomposition}), we first note that $\wcat(D_2)=\cat(D_2)=2$. Indeed the map $X\stackrel{i_1}{\to}X\times X \to C_{\Delta}(X)$ induces a map $X\to D_2$ which is a $(2p-1)$-equivalence and the result follows from $\wcat(X)=2$ and Proposition \ref{growth-of-wcat}.

As $p+q\geq \dim D_2=q+1$ and $\dim D_2\leq 3p-2$ we apply Lemma \ref{crudeHsquare} to the cofibration sequence
\[S_2 \stackrel{\beta_2}{\longrightarrow}D_{2}\hookrightarrow D_{3}.\]
 We then obtain the following homotopy commutative diagram:
	\[\xymatrix{D_3 \ar[rr]^{\bar{\Delta}}\ar[d]_{\delta} && D_3^{\wedge 3}\\
	\Sigma S_2 \ar[rr]_{\bar H(\beta_2)}&& D_2^{\wedge 3}\ar@{^(->}[u]}\]
and $\wcat(D_3)\leq 2$ if and only if $\bar H(\beta_2)=0$. Note that $$\Sigma S_2=S^{p+q+1}\vee S^{p+q+1} \quad \mbox{and} \quad  D_2^{\wedge 3}\simeq S^{3p}\cup \bigcup\limits_{3}e^{4p}\cup \dots.$$
Note also that the $\cat$-Hopf invariant $H(\beta_2)$ is given by a map
$$S_2=S^{p+q}\vee S^{p+q} \to F_2(D_2)=\Omega D_2*\Omega D_2*\Omega D_2\simeq S^{3p-1}\cup \bigcup\limits_{3}e^{4p-2}\cup \dots.$$
Since $p+q<4p-2$, this map can only touch the bottom sphere $S^{3p-1}$ of $F_2(D_2)$.
It follows from \cite[Corollary 4.8]{GGV2} that $H(\beta_2)$ is completely determined by the stable class $(2+(-1)^ p)H_0(\alpha)$. Actually it follows from the analysis done in the proof of \cite[Corollary 4.8]{GGV2} and \cite[Theorem 5.6]{GonzalezGrantV} that, on each sphere $S^{p+q}$, $H(\beta_2)$ can be identified to $$\Sigma^{p}((2+(-1)^ p)H_0(\alpha)): S^{p+q}\to S^{3p-1}.$$
Taking into account Remark \ref{rem-inv}, we can then conclude that on each sphere $S^{p+q+1}$, the crude invariant $\bar H(\beta_2)$ can be identified to $$\Sigma^{p+1}((2+(-1)^ p)H_0(\alpha)): S^{p+q+1}\to S^{3p}.$$
Since $(2+(-1)^ p)H_0(\alpha)$ is stable and nontrivial, we conclude that $\wcat(D_3)\geq 3$ and, by Proposition \ref{growth-of-wcat}, that $\wTC(X)=\wcat(C_{\Delta})\geq 3$. 

We now prove (b). If $\TC(X)=3$ this is immediate. Let us suppose that $\TC(X)=4$. Since $D_3$ is a $3$-cone, we have $\cat(D_3)\leq  3$ (actually $\cat(D_3)= 3$ since $\wcat(D_3)\geq  3$). As we have $\dim (D_3)=p+q+1 \leq 4p-2$ and $\dim (D_3) \leq 2q+1$ we can apply Lemma \ref{crudeHsquare} to the cofibration sequence
\[S_3 \stackrel{\beta_3}{\longrightarrow}D_{3}\hookrightarrow D_{4}=C_{\Delta}(X).\]
We then obtain the following homotopy commutative diagram:
\[\xymatrix{D_4 \ar[rr]^{\bar{\Delta}}\ar[d]_{\delta} && D_4^{\wedge 4}\\
	\Sigma S_3 \ar[rr]_{\bar H(\beta_3)}&& D_3^{\wedge 4}\ar@{^(->}[u]}\]
and $\wcat(D_4)\leq 3$ if and only if $\bar H(\beta_3)=0$. As before we observe that $$\Sigma S_3=S^{2q+2} \quad \mbox{and} \quad  D_3^{\wedge 4}\simeq S^{4p}\cup \bigcup\limits_{4}e^{5p}\cup \dots$$ and that the $\cat$-Hopf invariant $H(\beta_3)$ is given by a map
$$S_3=S^{2q+1} \to F_3(D_3)=\Omega D_3*\Omega D_3*\Omega D_3*\Omega D_3\simeq S^{4p-1}\cup \bigcup\limits_{4}e^{5p-2}\cup \dots.$$
In this case, dimensional reasons do not permit us to see that $H(\beta_3)$ can only touch the bottom sphere $S^{4p-1}$ of $F_3(D_3)$. However, in \cite[Proof of Theorem 1.1]{GGV2}, it has been proved that $H(\beta_3)$ coincides with the map
\begin{equation*}\xymatrix{
	S^{2q+1} \ar[rrrr]^{2(2+(-1)^p)\cdot H_0(\alpha)\ast H_0(\alpha)} &&&& S^{4p-1} \ar@{^{(}->}[r] & F_3(D_3)
}\end{equation*}
where the second map is the inclusion of the bottom sphere. 
Furthermore, it follows from \cite[Lemma 4.9]{GGV2} that the bottom sphere $S^{4p-1}$ of the $(3p+q-1)$-skeleton of $F_3(D_3)$ splits off as a wedge summand, which permits one to say that $H(\beta_3)$ is completely determined by the (stable) map $h:=2(2+(-1)^ p)H_0(\alpha)\ast H_0(\alpha)$. We will see that the same is true for $\bar H(\beta_3)$. Indeed, since the inclusion $S^p\hookrightarrow X=S^p\cup_{\alpha} e^{q+1}$ is a $q$-equivalence, it induces a $q$-equivalence $C_{\Delta}(S^p)\to C_{\Delta}(X)$ and we can see that all the vertical maps in the following diagram are $(3p+q)$-equivalence.
\[
\xymatrix{
&& \Sigma F_3(C_{\Delta}(S^p)) \ar[r]_{ev\circ\Sigma \bar q_{4}}\ar[d] & C_{\Delta}(S^p)^{\wedge 4}\ar[d]\\
S^{2q+2} \ar[r]^{\Sigma h}  \ar@/_1.0pc/[rr]_{\Sigma H(\beta_3)}& S^{4p} \ar@{^{(}->}[r]  \ar@{^{(}->}[ru]& \Sigma F_3(D_3) \ar[r]_{ev\circ\Sigma \bar q_{4}}  & D_3^{\wedge 4}
}
\]
Recall that the composition of the bottom line coincides with $\bar H(\beta_3)$ and that this element is stable.
As $\Sigma C_{\Delta}(S^p)^{\wedge 4}\to \Sigma D_3^{\wedge 4}$ is a $(3p+q+1)$-equivalence, we can deduce from Proposition \ref{hc-sphere} that the bottom sphere $S^{4p+1}$ of the $(3p+q+1)$-skeleton of $\Sigma D_3^{\wedge 4}$ splits off as a wedge summand. Furthermore, as indicated in the diagram, the inclusion of the bottom sphere of $\Sigma F_3(C_{\Delta}(X))$ lifts into $\Sigma F_3(C_{\Delta}(S^p))$ and, by using Remark \ref{rem-inv}, we can see that all the maps from $S^{4p}$ correspond to the inclusion of the bottom sphere. Since $2q+2<3p+q$, we can then see, after suspension of the diagram, that $\Sigma \bar H(\beta_3)$ is completely determined by $\Sigma^2 h$. By stability, 
  we can then conclude that $\bar H(\beta_3)$ is completely determined by $2(2+(-1)^ p)H_0(\alpha)\ast H_0(\alpha)$. Since $\TC(X)=4$, this element is nontrivial (\cite[Corollary 4.10]{GGV2}) and consequently $\wcat(D_4)\geq 4$. Therefore $\wcat(C_{\Delta}(X))=\wTC(X)=4$ and $  \wTC(X)=\TC(X)$. The final statement follows from Theorem \ref{result}.
\end{proof}

We now exhibit a class of spaces which satisfy $\TC(X\times S^n)= \TC(X)+\TC(S^n)$ for any $n$.

\begin{thm}\label{twocell2} Let $X=S^p\cup_{\alpha}e^ {q+1}$ be a two-cell complex such that $2p-1<q<3p-2$ ($p\geq 2$) and $2(2+(-1)^ p)H_0(\alpha)\neq 0$. Then
	\begin{enumerate}
		\item[(a)] $\sigma\wTC(X;2)\geq 3$
		\item[(b)] Moreover, if one of the following conditions holds
		\begin{enumerate}
			\item [(i)] $2(2+(-1)^p)H_0(\alpha)\ast H_0(\alpha)= 0$
				\item [(ii)] $4(2+(-1)^p)H_0(\alpha)\ast H_0(\alpha)\neq 0$
		\end{enumerate} then $\sigma\wTC(X;2)=\TC(X)$ and, for any $n$, $$\TC(X\times S^n)= \TC(X)+\TC(S^n).$$		
	\end{enumerate}
\end{thm}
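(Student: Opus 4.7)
The plan is to follow closely the proof of Theorem \ref{twocell}, working with the cone decomposition $\ast=D_0\subset D_1\subset\cdots\subset D_4=C_{\Delta}(X)$ recalled in \eqref{conedecomposition}, but replacing the use of Lemma \ref{crudeHsquare} by a $2$-sigma analogue (the announced Lemma \ref{crudeHsquare2}). In such a stable ``multiplied by $2$'' form, one obtains for each $i$ a factorization of $2\Sigma^{\infty}\bar{\Delta}_{i+1}$ through $2\Sigma^{\infty}\bar H(\beta_i)$, so that non-triviality of the latter forces $\sigma\wcat(D_{i+1};2)\geq i+1$. Producing and justifying this $2$-sigma version of the factorization diagram is the chief obstacle; once it is in place, the rest is a rather mechanical adaptation of the Hopf-invariant analysis of Theorem \ref{twocell}.

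For part (a), I aim to show $\sigma\wcat(D_3;2)\geq 3$. The computations of \cite{GGV2} reproduced in the proof of Theorem \ref{twocell} identify $\bar H(\beta_2)$, on each $S^{p+q+1}$-summand of $\Sigma S_2$, with the stable class $\Sigma^{p+1}((2+(-1)^p)H_0(\alpha))$; multiplying by $2$ gives $\Sigma^{p+1}(2(2+(-1)^p)H_0(\alpha))$, which is non-trivial by hypothesis. The $2$-sigma analogue of Lemma \ref{crudeHsquare} then yields $\sigma\wcat(D_3;2)\geq 3$, and since the inclusion $D_3\hookrightarrow D_4$ is a $(2q+1)$-equivalence satisfying the dimensional hypothesis of Proposition \ref{growth-of-sigmawcat}, this propagates to $\sigma\wTC(X;2)=\sigma\wcat(C_{\Delta}(X);2)\geq 3$.

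For part (b), the chain $\sigma\wTC(X;2)\leq\wTC(X)\leq\TC(X)$ reduces the claim to proving $\sigma\wTC(X;2)\geq\TC(X)$. From Theorem \ref{twocell} one has $\TC(X)=4$ if and only if $2(2+(-1)^p)H_0(\alpha)\ast H_0(\alpha)\neq 0$. Under condition (i) this element vanishes, so $\TC(X)\leq 3$ and part (a) already forces equality $\sigma\wTC(X;2)=\TC(X)=3$. Under condition (ii), $\TC(X)=4$ and I apply the $2$-sigma analogue of Lemma \ref{crudeHsquare} to the cofibration $S_3\stackrel{\beta_3}{\to}D_3\to D_4$: the bottom-sphere splitting argument involving $C_{\Delta}(S^p)\to C_{\Delta}(X)$ from the proof of Theorem \ref{twocell} identifies $\bar H(\beta_3)$ stably with $2(2+(-1)^p)H_0(\alpha)\ast H_0(\alpha)$, so $2\bar H(\beta_3)$ corresponds stably to $4(2+(-1)^p)H_0(\alpha)\ast H_0(\alpha)$, which is non-zero by (ii); hence $\sigma\wcat(D_4;2)\geq 4$ and $\sigma\wTC(X;2)=4=\TC(X)$. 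Finally, since $\dim(X)=q+1<3p-1\leq p\cdot\TC(X)-1$, Theorem \ref{result2} yields $\TC(X\times S^n)=\TC(X)+\TC(S^n)$ for every $n$.
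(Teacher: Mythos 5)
Your proposal is correct and follows the paper's own proof essentially step for step: the paper likewise isolates the $2$-sigma analogue of Lemma \ref{crudeHsquare} as a separate statement (Lemma \ref{crudeHsquare2}), applies it to the cofibrations $S_2\to D_2\to D_3$ and $S_3\to D_3\to D_4$ using the identifications of $2\bar H(\beta_2)$ and $2\bar H(\beta_3)$ with the stable classes $2(2+(-1)^p)H_0(\alpha)$ and $4(2+(-1)^p)H_0(\alpha)\ast H_0(\alpha)$ carried over from the proof of Theorem \ref{twocell}, propagates the lower bound via Proposition \ref{growth-of-sigmawcat}, handles case (i) by the vanishing criterion of \cite{GonzalezGrantV} giving $\TC(X)\leq 3$, and concludes with Theorem \ref{result2}. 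There are no substantive differences.
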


We will follow the same strategy as before by using the following adaptation of Lemma \ref{crudeHsquare}.

\begin{lem}\label{crudeHsquare2} Let $K$ be a $(p-1)$-connected CW complex with $p\geq 2$, $\cat(K)\leq k$ and $\dim (K) \leq (k+1)p-2$. Let $d\geq \dim (K)$ and $\alpha: S\to K$ be a map where $S$ is a finite wedge of spheres $S^d$. Then, for $X=K\cup_{\alpha }CS$ and any integer $N\geq 0$, there exists a homotopy commutative diagram
\[\xymatrix{\Sigma^N X \ar[rr]^{2\Sigma^N\bar{\Delta}_{k+1}}\ar[d]_{\Sigma^N \delta} && \Sigma^N X^{\wedge (k+1)}\\
\Sigma^{N+1} S \ar[rr]_{2\Sigma^N\bar H(\alpha)}&&\Sigma^N K^{\wedge (k+1)}\ar@{^(->}[u]}\]
where $\delta$ is the connecting map in the Puppe sequence of the cofibration $S\stackrel{\alpha}{\to}K\to X$. Moreover $\sigma\wcat(X;2)\leq k$ if and only if the stable class of $2\bar H(\alpha)$ is trivial.
\end{lem}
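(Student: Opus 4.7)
The plan is to derive the statement directly from Lemma \ref{crudeHsquare} by applying the $N$-fold suspension functor and then doubling the horizontal maps. First I would apply $\Sigma^N$ to the homotopy commutative square provided by Lemma \ref{crudeHsquare}, obtaining
\[\xymatrix{\Sigma^N X \ar[rr]^{\Sigma^N\bar{\Delta}_{k+1}}\ar[d]_{\Sigma^N\delta} && \Sigma^N X^{\wedge (k+1)}\\
\Sigma^{N+1} S \ar[rr]_{\Sigma^N\bar H(\alpha)}&& \Sigma^N K^{\wedge (k+1)}\ar@{^(->}[u]}\]
Since $\Sigma^{N+1}S$ is a genuine suspension, the hom-set $[\Sigma^{N+1}S,\Sigma^N K^{\wedge(k+1)}]$ carries an abelian group structure and multiplication by $2$ commutes with pre- and post-composition. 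Hence doubling both horizontal arrows preserves homotopy commutativity and yields the displayed diagram. For the $N=0$ case, $2\bar{\Delta}_{k+1}$ is simply interpreted via the factorization through $\Sigma S$, where the cogroup structure lives; alternatively one reduces to $N\geq 1$ since the final statement is about the stable class.

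For the ``moreover'' clause, the forward implication is immediate from the diagram: if the stable class of $2\bar{H}(\alpha)$ vanishes then $2\Sigma^N\bar{H}(\alpha)\simeq \ast$ for $N$ large, and the diagram forces $2\Sigma^N\bar{\Delta}_{k+1}\simeq\ast$, so $\sigma\wcat(X;2)\leq k$. For the converse I would mirror the dimension-counting argument used in Lemma \ref{crudeHsquare}. Assume $\sigma\wcat(X;2)\leq k$, i.e.\ $2\Sigma^N\bar{\Delta}_{k+1}\simeq\ast$ for all sufficiently large $N$. Since the inclusion $K^{\wedge(k+1)}\hookrightarrow X^{\wedge(k+1)}$ is a $(kp+d)$-equivalence, its $N$-th suspension is a $(kp+d+N)$-equivalence, and as $\dim(\Sigma^N X)=d+1+N<kp+d+N$, the homotopy commutative square above yields $2\Sigma^N\bar{H}(\alpha)\circ \Sigma^N\delta\simeq \ast$.

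Finally I would use the Puppe sequence of the cofibration $S\stackrel{\alpha}{\to}K\to X$, suspended $N$ times, to extend $2\Sigma^N\bar{H}(\alpha)$ to a map $\Sigma^{N+1}K\to \Sigma^N K^{\wedge(k+1)}$. Because $\Sigma^N K^{\wedge(k+1)}$ is $((k+1)p-1+N)$-connected while $\dim(\Sigma^{N+1}K)\leq (k+1)p-1+N$ (using $\dim(K)\leq (k+1)p-2$), any such extension is null-homotopic, so $2\Sigma^N\bar{H}(\alpha)\simeq \ast$ and the stable class of $2\bar{H}(\alpha)$ is trivial. I do not expect a serious obstacle: the only non-routine points are the dimensional bookkeeping and the compatibility between doubling and suspension, both of which are straightforward once the suspended square is in place.
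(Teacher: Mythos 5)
Your proposal is correct and follows essentially the same route as the paper, which simply states that the diagram and the equivalence are obtained by a direct adaptation of the proof of Lemma \ref{crudeHsquare}; your write-up supplies exactly that adaptation (suspend the square, double using the co-group structure on $\Sigma^{N+1}S$, and repeat the two dimension counts). The only point worth keeping an eye on is the one you already flag: doubling only commutes with precomposition along a co-H-map, so for $N=0$ the top arrow must indeed be read through the factorization over $\Sigma S$.
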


\begin{proof}
The diagram is obtained from the diagram of Lemma \ref{crudeHsquare} and the proof of the last statement follows from a direct adaptation of the arguments given in the proof of Lemma \ref{crudeHsquare}.
\end{proof} 

\begin{rem} {\rm Applying Lemma \ref{crudeHsquare2} to $X=S^p\cup_{\alpha}e^ {q+1}$ where $2p-1<q<3p-2$ and $2H_0(\alpha)\neq 0$, we obtain that $\sigma\wcat(X;2)\geq 2$ since $2H_0(\alpha)$ is a stable class. In this case we will have $\sigma\wcat(X)=\wcat(X)=2$. However if $H_0(\alpha)\neq 0$ but $2H_0(\alpha)= 0$ then $X$ satisfies $\sigma\wcat(X;2)=1<\wcat(X)=2$. }
\end{rem}

\begin{proof}[Proof of Theorem \ref{twocell2}]
The proof is parallel to the proof of Theorem \ref{twocell}. We first note that, since the hypothesis implies that $\sigma\wcat(X;2)=\wcat(X)=2$, it follows from Proposition \ref{growth-of-sigmawcat} that the stage $D_2$ of the cone decomposition (\ref{conedecomposition}) satisfies $\sigma\wcat(D_2;2)=\wcat(D_2)=2$.
Applying Lemma \ref{crudeHsquare2} to the cofibration sequence
\[S_2 \stackrel{\beta_2}{\longrightarrow}D_{2}\hookrightarrow D_{3}\]
we see that $\sigma\wcat(D_3;2)\leq 2$ if and only if $2\bar H(\beta_2)$ is stably trivial. Since this class is completely determined by the stable class $2(2+(-1)^ p)H_0(\alpha)$, the hypothesis $2(2+(-1)^ p)H_0(\alpha)\neq 0$ permits us to conclude that $\sigma\wcat(D_3;2)\geq 3$. It then follows from Proposition \ref{growth-of-sigmawcat} that $\sigma\wTC(X;2)=\sigma\wcat(C_{\Delta}(X);2)\geq 3$ as claimed in $(a)$. By \cite{GonzalezGrantV}, we know that the condition given in $(i)$ of $(b)$ implies that $\TC(X)\leq 3$ so we have $\sigma\wTC(X;2)=\TC(X)$ in that case. In case $(ii)$, the equality $\sigma\wTC(X;2)=\TC(X)$ follows from the application of Lemma \ref{crudeHsquare2} to the cofibration sequence 
\[S_3 \stackrel{\beta_3}{\longrightarrow}D_{3}\hookrightarrow D_{4}=C_{\Delta}(X)\]
together with the fact that the stable class of $2\bar H(\beta_3)$ is completely determined by the stable class $4(2+(-1)^p)H_0(\alpha)\ast H_0(\alpha)$.
%Considering the diagram \[\xymatrix{\Sigma D_3 \ar[rr]^{2\Sigma\bar{\Delta}}\ard]_{\delta} && \Sigma %D_3^{\wedge 3}\\
%	\Sigma^2 S_2 \ar[rr]_{2\Sigma\bar H(\beta_2)}&& \Sigma D_2^{\wedge 3}\ar@{(->}[u]}\]
\end{proof}

\begin{ex} {\rm As an application of Theorem \ref{twocell2}, we consider the space $X=S^8\cup_{\alpha} e^{19}$ where $\alpha:S^{18}\to S^8$ is the composition of a generator $\gamma$ of the stable group $\pi_{18}(S^{15})=\Z/24\Z$ with the Hopf map $S^{15}\to S^8$. The map $\alpha$ is in the metastable range and $H_0(\alpha)=\gamma$ (see, for instance \cite[Corollary 6.23]{C-L-O-T}). Consequently $3H_0(\alpha)$ and $6H_0(\alpha)$ are not trivial while $6H_0(\alpha)\ast H_0(\alpha)=0$ since $H_0(\alpha)\ast H_0(\alpha)\in \pi_{37}(S^{31})=\Z/2\Z$. We then obtain $\TC(X\times S^n)=\TC(X)+\TC(S^n)$ for any $n$.}
\end{ex}

\begin{rem} {\rm 
%We observe that under the condition of Theorem \ref{twocell}, we have $\TC(X)=\cat(C_{\Delta}(X))$ (see \cite[Theorem 1.1]{GGV2}). 
As mentioned in the introduction of \cite{GGV2}, the first possible instance $(q,p)$ of a map $\alpha: S^q\to S^p$ in the metastable range satisfying $H_0(\alpha)\neq 0$ but $3H_0(\alpha)=0$ is $(q,p)=(14,6)$. We here construct such a map. We consider the following part of the EHP sequence 
together with the isomorphism given by the double suspension $E^2$ and the description of the composite $PE^2$ (see \cite{Whitehead}, p. 548)
\[
\xymatrix{\pi_{13}(S^5) \ar[r]^E & \pi_{14}(S^6) \ar[r]^H &\pi_{14}(S^{11}) \ar[r]^P & \pi_{12}(S^{5})\\
&&\pi_{12}(S^9)\ar[u]^{E^2}_{\cong} \ar[ur]_{[\iota_5,\iota_5]\circ (-)}
}
\]
We note that the map $H$ corresponds to our $H_0$.
Let $\gamma \in \pi_{12}(S^9)=\Z/24\Z$ be a generator and $\beta=E^2(8\gamma)$. We have 
\[P\beta= [\iota_5,\iota_5]\circ 8 \gamma = 2[\iota_5,\iota_5]\circ 4 \gamma=0\]
since $[\iota_5,\iota_5]$ is of order $2$. Note that the second equality holds because any map in $\pi_{12}(S^9)$ is a suspension and therefore a co-H-map. Consequently, there exists $\alpha \in \pi_{14}(S^6) $ such that $H(\alpha)=\beta\neq 0$. However, $3H(\alpha)=3\beta=E^2(24 \gamma)=0$. In conclusion, $X=S^6\cup_{\alpha}e^{15}$ is a two-cell complex in the metastable range which does not satisfy the conditions of Theorem \ref{twocell}. Therefore it might be a candidate for an example of a two-cell space satisfying $\TC(X\times S^n)=\TC(X)$ for $n$ odd.

}\end{rem} 

\section*{Acknowledgments} 
	
	This research was partially financed by Portuguese Funds through FCT (Funda\c c\~ao para a Ci\^encia e a Tecnologia) within the Projects UIDB/00013/2020 and UIDP/00013/2020, and by the project PID2020-118753GB-I00 of the Spanish Ministry of Science and Innovation.
	
	\bigskip

\end{document}